\newtheorem{thm}{Theorem}
\newtheorem{lem}[thm]{Lemma}
\newtheorem{prop}[thm]{Proposition}
\newtheorem{cor}[thm]{Corollary}
\newtheorem{conj}{Conjecture}
\newtheorem*{claim}{Claim}
\theoremstyle{definition}
\newtheorem{defi}[thm]{Definition}
\DeclareMathOperator{\mex}{mex}
\newlength{\normalparindent}
\begin{document}

\title{The Arithmetic-Periodicity of \textsc{cut} for $\mathcal{C}=\{1,2c\}$}
\author{Paul Ellis and Thotsaporn Aek Thanatipanonda}
\date{January 24, 2021}

\maketitle
\thispagestyle{empty}

\begin{abstract}
\textsc{cut} is a class of partition games played on a finite number of finite piles of tokens.  Each version of \textsc{cut} is specified by a cut-set $\mathcal{C}\subseteq\mathbb{N}$.  A legal move consists of selecting one of the piles and partitioning it into $d+1$ nonempty piles, where $d\in\mathcal{C}$.  No tokens are removed from the game.  It turns out that the nim-set for any $\mathcal{C}=\{1,2c\}$ with $c\geq 2$ is arithmetic-periodic, which answers an open question of \cite{par}.  The key step is to show that there is a correspondence between the nim-sets of \textsc{cut} for $\mathcal{C}=\{1,6\}$ and the nim-sets of \textsc{cut} for $\mathcal{C}=\{1,2c\}, c\geq 4$.  The result easily extends to the case of $\mathcal{C} = \{1, 2c_1, 2c_2, 2c_3, ...\}$, where $c_1,c_2, ... \geq 2$.

\end{abstract}

\section{Partition games and the game \textsc{CUT}} 
 
In this paper, we calculate the nim-sequence of a particular class of the game \textsc{cut}.  This game is part of a larger class of \textit{partition games}.  Partition games are characterized by the fact that a legal move consists of selecting one of various piles and partitioning it into some smaller piles.  In these games, no tokens are ever removed.  Two notable examples are \textsc{couples are forever} and \textsc{grundy's game}.  In the former, a player may partition any pile of size $3$ or greater into two smaller piles.  In the latter, a player may partition any pile into two smaller piles of \textit{unequal} size.  Incidentally, the nim-sequence for these games remains unknown.

For \textsc{cut}, the story is a bit different.  Here each particular game of \textsc{cut} is defined by a set of positive integers, $\mathcal{C}$, called the \emph{cut-set}.   Then a legal move for the game with cut-set $\mathcal{C}$ consists of selecting a pile and partitioning it into $d+1$ nonempty parts, for some $d\in\mathcal{C}$.  This is the same as ``cutting'' the pile $d$ times. 

Given the breadth of combinatoral game theory writ large \cite{WW1}, the authors are surprised to learn that \textsc{cut} has only been studied very recently in \cite{par}.  We will calculate the nim-sequence for \textsc{cut} when $\mathcal{C}=\{1,2c\}$ for all $c\geq 2$.  So for the games in this paper, a player may take any of the piles and split it into either two piles or into $2c+1$ piles.  In general, a combinatorial game terminates only when there are no legal moves remaining, so for these games, this is when all the tokens are in piles of size $1$.

All of the games mentioned have two variants:  \emph{normal play} and \emph{mis\'{e}re play}.  In normal play, the last player to make a legal move wins, and in misere play, the last player loses.  We only investigate the normal play variant.



The classic Sprague-Grundy Theorem says that, under normal play, each position in any of these games (or any impartial game) is equivalent to some one-pile position of \textsc{nim}.  Hence we say that if a certain position is equivalent to a nim pile of size $n$, then it has \emph{nim-value} of $n$.  Fixing a cut-set $\mathcal{C}$, we write  $\mathcal{G}(n)$ to denote the nim-value of the position of \textsc{cut} with a single pile of size $n$.  Then $\mathcal{G}(n)$ is called the  \emph{nim-sequence} of the game with cut-set $\mathcal{C}$.  

Of course, after just one move, any game of \textsc{cut} will have more than one pile.  \textsc{cut}, like \textsc{nim}, is \emph{disjunctive}, which means each move only affects one of the existing piles.  The Sprague-Grundy Theorem further tells us the nim-values only make sense if we calculate the nim-value of a position with multiple piles using the \emph{nim-sum}.  The nim-sum of $x$ and $y$, denoted $x\oplus y$ is the binary sum modulo $2$.  For example, $9 \oplus 11 = 1001_2 \oplus 1011_2 =10_2 = 2$.   
Then to calculate the nim-sequence for \textsc{cut}, or any combinatorial game, we proceed recursively: 

\begin{defi} Given a pile of size $n$, an \emph{option} $O_n$ is a sequence of piles obtainable via a single legal move on that pile.  In other words, $O_n$ is a particular partition of 
$n$ with $d+1$ parts for some $d \in \mathcal{C}$.
Formally, $O_n = (h_0,h_1, ..., h_d)$ where $1 \leq h_i$ 
and $h_0+h_1+\dots+h_d = n$.  Let $\mathcal{O}_n$ denote the set of all options $O_n$.
\end{defi}

\begin{defi}\label{nim value of an option}
The nim-value of the option $O_n = (h_0,h_1, ..., h_d)$ is
\[\mathcal{G}(O_n) = \mathcal{G}(h_0) \oplus \mathcal{G}(h_1) \oplus
\dots \oplus \mathcal{G}(h_d).\] 
\end{defi}

Finally the Sprague-Grundy Theorem tells us that the nim-value $\mathcal{G}(n)$ is given as 
\[ \mathcal{G}(n) = \mex\{ \mathcal{G}(O_n) \mid  O_n\in\mathcal{O}_n  \},\]
where $\mex(S)$ is the least natural number missing from $S$.
For example $\mex(0,3,3,1,5) = 2.$  For readers unfamilar with the  nim-sum and the $\mex$ function, we refer them to \cite{LIP}.  Finally, we note that the nim-sequence of a game encodes its winning strategy.  The positions with nim-value $0$ are called $\mathcal{P}$-positions, meaning that the previous player can win, and the other positions are called $\mathcal{N}$-positions, meaning that the next player can win.  In the latter case, the next player wins by making a move which turns the current $\mathcal{N}$-position into a $\mathcal{P}$-position.


\section{Known results and our contribution}

Let us summarize the results in \cite{par}
to give the reader some idea about the current state of affairs.   
We found that paper to be rather clear, and we recommend working through these examples as an initial exercise.  They also elaborate on some properties of nim-sum that we only mention briefly in this paper.
Table \ref{table-known results} shows the known nim-sequences of various versions of \textsc{cut}.

\begin{table}[ht]
\begin{center}
\begin{tabular}{ |c| c | c |} \hline
 Cut-set $\mathcal{C}$ & Nim sequence & Proposition in \cite{par}  \\ \hline
 $1 \not\in \mathcal{C}$ &   $(0)^c(+1)$ & 6 \\
 &  where $c$ is the smallest element in $\mathcal{C}$ &   \\ \hline
  $1\in\mathcal{C}$ and  &   $(0,1)$ &  3 \\
$\mathcal{C}$ contains only odd numbers &  &   \\ \hline
$ \{1,2,3\} \subseteq \mathcal{C}$ &   $(0)(+1)$ & 7 \\
 & (i.e. $\mathcal{G}(n) = n-1) $ & \\ \hline
 $ \mathcal{C}=\{1,3,2c\}$ &   $(0,1)^c(+2)$ &  8 \\  \hline
\end{tabular}
\end{center}
\caption{Some known results about \textsc{CUT}.}
\label{table-known results}
\end{table}

The notation $(0,1)^3(+2)$, for example, denotes the sequence $0,1,0,1,0,1,2,3,2,3,2,3,4,5,4,5,4,5,\ldots$.  In this case, we would say that the sequence is arithmetic-periodic with period $6$ and \emph{saltus} $2$.  Formally, a sequence $\mathcal{G}(n)$ is 
\textit{arithmetic-periodic} if there is a period $P$ 
and a saltus $s>0$ such that for all $n > 0$,
\[  \mathcal{G}(n+P) = \mathcal{G}(n) + s.\]

One of the cases that  \cite{par} conjectured to also have an arithmetic-periodic nim-sequence is $\mathcal{C} = \{1, 2c\}$ for any $c \geq 2$.  Most of the rest of this paper will be the proof of that.
 The proof is quite long, and we owe some gratitude to our computer friend. It helped us to make many important observations. 
In particular, we will prove:
\begin{thm}[Main Target]\label{main target}
The nim-sequence of the game \textsc{cut} with cut-set 
$\mathcal{C} =\{1, 2c  \}$ for any $c \geq 2$ is precisely
\[   (0, 1)^c(2, 3)^c, 1, 4, (5, 4)^{c-1}, (3, 2)^c(4, 5)^c(6, 7)^c  (+8)            . \]
\end{thm}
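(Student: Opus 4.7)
The plan is to proceed by strong induction on $n$, with almost all of the work in the inductive step. Let $V_n$ denote the value of $\mathcal{G}(n)$ dictated by the formula; writing $n = 12c \cdot q + r$ with $0 \leq r < 12c$, we have $V_n = v_r + 8q$ for an explicit $v_r \in \{0, \ldots, 7\}$ read off from the period. Assuming $\mathcal{G}(m) = V_m$ for every $m < n$, the mex definition reduces the theorem to two standard assertions: \emph{achievability}, that every $k$ with $0 \leq k < V_n$ is realised as $\mathcal{G}(O_n)$ for some option $O_n$, and \emph{avoidance}, that no option $O_n$ satisfies $\mathcal{G}(O_n) = V_n$. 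A finite brute-force check covering $n$ up to the first full period establishes the base of the induction.

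For both assertions I would partition the options according to the cut-set. A 2-pile option $O_n = (h_0, h_1)$ has value $\mathcal{G}(h_0) \oplus \mathcal{G}(h_1) = V_{h_0} \oplus V_{h_1}$ by induction. Since each $V_m$ splits cleanly as a three-bit residue value plus $8\lfloor m/12c\rfloor$, and XOR respects this decomposition, the option value is controlled entirely by the residues $h_0, h_1 \bmod 12c$ together with the quotient carry. For a $(2c+1)$-pile option $(h_0, \ldots, h_{2c})$, a ``frozen piles'' device is very useful: setting $j$ of the parts equal to $1$ contributes $0$ to the nim-sum (because $\mathcal{G}(1) = 0$) and reduces the analysis to a $(2c+1-j)$-pile split of $n - j$. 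Taking $j = 2c$, $2c-1$, and $2c-2$ yields one-, two-, and three-pile virtual splits whose option values are described by the same arithmetic-periodic formulas, and sweeping over these configurations gives a large, structured set of candidate option values against which achievability and avoidance can be checked residue-class by residue-class.

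The main obstacle is the irregular block $1, 4, (5, 4)^{c-1}$ inside the period: near it the formula has no clean algebraic form, so the nim-sum calculations branch into many subcases depending on residues mod $12c$. Handling these subcases uniformly in $c$ appears combinatorially unpleasant, which suggests the strategy flagged in the abstract: prove the theorem directly for $c = 3$ (the case $\mathcal{C} = \{1, 6\}$, where the period is $36$), and then transfer it to $c \geq 4$ via a correspondence that converts a $(2c+1)$-pile split of $n$ into a $7$-pile split of a related value by coalescing $2(c-3)$ of the parts and tracking the nim-sum correction. Provided the induced map on option values is compatible with the $V_m$ pattern, up to shifts dictated by the different period lengths $12c$ versus $36$, only the base cases $c = 2$ and $c = 3$ need be handled by hand, and the general statement follows by transport along the correspondence; pinning down this correspondence precisely is the step I expect to be the hardest.
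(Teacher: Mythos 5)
Your high-level plan---verify the formula for $c=2,3$ by hand and transfer it to $c\geq 4$ via a correspondence with the $\{1,6\}$ game---is indeed the route the paper takes, and you are right that the first strategy you describe (direct achievability/avoidance induction, uniform in $c$) is the one to abandon. But the step you yourself flag as the hardest, pinning down the correspondence, is where essentially all of the content lives, and the sketch you give of it would not work as stated. You propose converting a $(2c+1)$-pile split of $n$ into a $7$-pile split ``by coalescing $2(c-3)$ of the parts and tracking the nim-sum correction.'' Merging piles does not interact controllably with the nim-sum (the value of a merged pile is unrelated to the XOR of the values of its pieces), so there is no usable ``correction'' to track. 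The paper's mechanism is different on both axes: the mismatch in the \emph{number of piles} is resolved by the recursion $\mathcal{N}(n+1,p+1,\{1,2c\})=\mathcal{N}(n,p,\{1,2c\})$ for $p\geq 4$ (Corollary \ref{two}), i.e.\ by appending or stripping piles of size $1$, which collapses the $(2c+1)$-pile analysis down to $7$ piles; and the mismatch in \emph{pile sizes} is resolved by a map $\phi_p$ that compresses residues modulo $2c$ to residues modulo $6$, shown to intertwine the nim-sets for every $p\geq 2$ (Lemma \ref{three}).

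Establishing that recursion is the technical core of the paper and is entirely missing from your proposal: it rests on decomposing the sequence of option values into alternating subsequences $a,a\oplus 1,a,\ldots$ that begin at ``entering partitions'' (all parts $\equiv 1 \bmod 2c$ or $\equiv 4c+2 \bmod 12c$) and end at ``exiting partitions,'' and on showing that for $p\geq 4$ every exiting partition, and every entering partition without a part of size $1$, shares its value with a non-exceptional one (Lemmas \ref{one} and \ref{entering lemma}, proved by explicit nim-sum identities among the row endpoints of the period). Your ``frozen piles'' device (padding with piles of size $1$) is a genuine ingredient of this, but only gives one containment of nim-sets; the reverse containment is exactly the entering/exiting analysis. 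Finally, the transfer is not clean even once the correspondence is defined: the $2$-pile nim-sets fail to match at $n=8c+a$ for odd $a$ with $5\leq a\leq 2c-3$, where $\mathcal{N}(n,2,\{1,2c\})$ picks up an extra value $1$, and one must check that this value already occurs in the $(2c+1)$-pile nim-set so the mex is unaffected. A complete proof along your lines would need to supply all of this.
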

This is Open Problem $1$ in \cite{par}, though they did prove it for the cases $c=2,3,4,5$.  (Refer to Table 1, Corollary 15, and the discussion preceeding Open Problem 1.)  

Our nim-sequence is arithmetic-periodic with period $12c$ and saltus $8$.  Arithmetic-periodic nim-sequences with a power of $2$ as saltus admit a division algorithm style decomposition of the nim-values of options.
The following is essentially Lemmas 10 and 11 in \cite{par}.

\begin{prop} \label{prop1}
Suppose there exists a period $P$, a saltus $s=2^t$,  $t>1$ such that for all $n\leq P$,
\[ \mathcal{G}(n) < s, \] 
and for all $n>P$
\[ \mathcal{G}(n) = \mathcal{G}(n - P) + s.\] 
Let $h_i = k_i P + r_i\leq N$ and $h'_i = k_i P+r'_i\leq N$  
where $1 \leq r_i, r'_i \leq P$, for all $0\leq i \leq d$.  
Suppose $O_n = (h_0,h_1,\dots,h_d)$ and $O'_n = (h'_0,h'_1,\dots,h'_d)$.
Then \[  \mathcal{G}(O_n) = \mathcal{G}(O'_n) 
\;\ \text{ if and only if } \;\  \mathcal{G}((r_0,r_1,\dots,r_d)) = \mathcal{G}((r'_0,r'_1,\dots,r'_d)) .  \]
\end{prop}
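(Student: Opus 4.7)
The plan is to exploit the fact that the saltus $s = 2^t$ is a power of two in order to convert ordinary addition into nim-sum, and then to factor out a common XOR-contribution coming from the quotients $k_i$.

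First I would prove by induction on $k$ that
\[ \mathcal{G}(kP + r) = \mathcal{G}(r) + ks \]
for every $k \geq 0$ and every residue $1 \leq r \leq P$. The case $k = 0$ is immediate, and the inductive step follows directly from the hypothesis $\mathcal{G}(n) = \mathcal{G}(n-P) + s$ applied at $n = (k+1)P + r$. Some small care is needed at the boundary $r = P$, so that the convention $1 \leq r \leq P$ (rather than $0 \leq r < P$) remains consistent with the periodicity rule at multiples of $P$; this is the only bookkeeping subtlety in the whole argument.

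The crucial step is to observe that, since $\mathcal{G}(r) < s = 2^t$, the integer $\mathcal{G}(r)$ occupies only bit positions below $t$, while $ks = k \cdot 2^t$ occupies only bit positions $t$ and above. Their binary representations are therefore disjoint, so ordinary addition coincides with nim-sum:
\[ \mathcal{G}(h_i) = \mathcal{G}(r_i) + k_i s = \mathcal{G}(r_i) \oplus (k_i s). \]
This is the only place where the hypothesis that $s$ is a power of two enters, and it is the main mechanism of the proof; without it the conclusion would simply fail.

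Finally, using Definition \ref{nim value of an option} together with the associativity and commutativity of $\oplus$, I would write
\[ \mathcal{G}(O_n) = \bigoplus_{i=0}^{d} \bigl( \mathcal{G}(r_i) \oplus k_i s \bigr) = \mathcal{G}\bigl((r_0,\dots,r_d)\bigr) \oplus K, \]
where $K := \bigoplus_{i=0}^{d} k_i s$ depends only on the quotients. The identical computation for $O'_n$ yields $\mathcal{G}(O'_n) = \mathcal{G}((r'_0,\dots,r'_d)) \oplus K$ with the \emph{same} value of $K$, since by hypothesis $O_n$ and $O'_n$ share the same sequence $(k_0,\dots,k_d)$. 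Because XOR by $K$ is an involution, $\mathcal{G}(O_n) = \mathcal{G}(O'_n)$ if and only if $\mathcal{G}((r_0,\dots,r_d)) = \mathcal{G}((r'_0,\dots,r'_d))$, which is exactly the desired biconditional. The hard part is really just the addition-equals-XOR observation; once that is in hand, the rest is a one-line manipulation.
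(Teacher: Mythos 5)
Your proof is correct and follows essentially the same route as the paper's: both rest on the observation that since each $\mathcal{G}(r_i) < s = 2^t$, the contribution $k_i s$ lives in disjoint bit positions, so ordinary addition coincides with nim-sum and the quotient contribution factors out as a common term. The only cosmetic difference is that you cancel the common term $K$ by XOR-involution while the paper notes both expressions have the form $(k_0 \oplus \cdots \oplus k_d)s + (\text{low bits})$; these are interchangeable.
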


\begin{proof}
By the arithmetic-periodic property,
\[ \mathcal{G}(h_i)  = \mathcal{G}(k_i P+r_i) = k_i s + \mathcal{G}(r_i) .    \]
Then 
\begin{align*}
\mathcal{G}(O_n) &=\mathcal{G}(h_0) \oplus   \mathcal{G}(h_1) \oplus \dots \oplus \mathcal{G}(h_d)\\
&=   (k_0 \oplus k_1 \oplus \dots \oplus k_d)s + 
[ \mathcal{G}(r_0) \oplus   \mathcal{G}(r_1) \oplus \dots \oplus \mathcal{G}(r_d) ]&\text{since } \mathcal{G}(r_i) < s =2^t\\
&=   (k_0 \oplus k_1 \oplus \dots \oplus k_d)s + 
\mathcal{G}((r_0,r_1,\dots,r_d))
\end{align*}
Similarly, $\mathcal{G}(O'_n)=(k_0 \oplus k_1 \oplus \dots \oplus k_d)s + 
\mathcal{G}((r'_0,r'_1,\dots, r'_d))$
\end{proof}

In the proofs that follow, we will want to find different options with the same nim-values.  Proposition \ref{prop1} allows us to restrict many calculations to the first period of $ \mathcal{G}(n)$.

\section{Patterns in the sequence. Nim-sets}\label{section-rows}

Ideally the proof would be nice and compact, but
we cannot avoid using some properties from the nim-sequence itself.
Let's write just the first period in a table of $6$ `rows.'  

\begin{table}[ht]
\begin{center}
\begin{tabular}{clclclclclc|c|c|c}
 0 & 1 & 0 & 1 & 0 & 1 & \dots & 0 & 1 \\
 2 & 3 & 2 & 3 & 2 & 3 & \dots & 2 & 3 \\
 \textbf{1} & 4 & 5 & 4 & 5 & 4 & \dots & 5 & 4 \\
 3 & 2 & 3 & 2 & 3 & 2 & \dots & 3 & 2 \\ 
 4 & 5 & 4 & 5 & 4 & 5 & \dots & 4 & 5 \\ 
 6 & 7 & 6 & 7 & 6 & 7 & \dots & 6 & 7 
\end{tabular}
\end{center}
\caption{The first $12c$ nim-values of \textsc{CUT} for $\mathcal{C}=\{1,2c\}$, $c\geq 3$}
\end{table}
Each row has $2c$ entries with alternating entries, except
that the first entry of the third row is $1$ (marked in bold), for
a total of $2c \cdot 6 =12c$ entries for each period.
The following observations will be very useful at several key stages of the proof. 

Excepting the first entry of the third row, note that the numbers in each row don't just alternate,  they do so by nim-adding $1$.  More precisely,
\begin{equation} \tag{ob1}  \label{ob1}
\mathcal{G}(n+1) = \mathcal{G}(n)\oplus 1 \;\  \text{ for }n \neq 0\bmod 2c \;\ \text{and} \;\  n \neq 4c+1 \bmod 12c
\end{equation}

The remaining observations show us how to find different partitions of the same number (i.e., two moves from the same position) which have the same nim-value.  This one is about the final entries of the rows \emph{of the first period}.
\begin{equation} \tag{ob2}    \label{ob2.1}
\mathcal{G}(2c) \oplus \mathcal{G}(12c)   = \mathcal{G}(4c) \oplus \mathcal{G}(10c) 
= \mathcal{G}(6c) \oplus \mathcal{G}(8c) = 6
\end{equation}
The rest are not restricted to the first period, and they all leverage the fundamental fact about the nim-sum that $k\oplus k=0$ for all $k$.
 \begin{equation} \tag{ob3.1} \label{ob3.1}
 \mathcal{G}(n) \oplus \mathcal{G}(n) \oplus \mathcal{G}(m+1)  \oplus \mathcal{G}(m+1)  
 = 0 = \mathcal{G}(n+1) \oplus \mathcal{G}(n+1) \oplus \mathcal{G}(m)  \oplus \mathcal{G}(m)
 \end{equation}
Here note that the nim-value of the last entry in each row of each period is the same as two entries prior: 
 \begin{equation} \tag{ob3.2} \label{ob3.2}
 \mathcal{G}(n) \oplus \mathcal{G}(n) \oplus \mathcal{G}(2ac)  
 = \mathcal{G}(n+1) \oplus \mathcal{G}(n+1) \oplus \mathcal{G}(2ac-2)
 \end{equation}
Similarly, the first entry in any but the third row of any period is the same as two entries later: 
 \begin{equation} \tag{ob3.3} \label{ob3.3}
\mathcal{G}(n+1) \oplus \mathcal{G}(n+1) \oplus \mathcal{G}(2ac+1)  
 = \mathcal{G}(n) \oplus \mathcal{G}(n) \oplus \mathcal{G}(2ac+3)  , \; a\neq 2\bmod 6
 \end{equation}
Finally, the second entry in the third row of each period is the same as two entries later:
 \begin{equation} \tag{ob3.4} \label{ob3.4}
\mathcal{G}(n+1) \oplus \mathcal{G}(n+1) \oplus \mathcal{G}(2ac+2)  
 = \mathcal{G}(n) \oplus \mathcal{G}(n) \oplus \mathcal{G}(2ac+4), \; a= 2\bmod 6
 \end{equation}



Since our version of \textsc{cut} has two distinct types of options, it makes sense to consider the effect of each one on the nim-sequence separately.

\begin{defi}\label{def-nim set}
The nim-set, $\mathcal{N}(n,p,\mathcal{C}),$ is the set of nim values
that arise from breaking $n$ tokens into $p$ piles. Formally, 
\[ \mathcal{N}(n,p,\mathcal{C}) =  \{ \mathcal{G}(O_n) \; | \;
O_n = (h_1, h_2, h_3,\dots, h_p) \text{ where }  h_1+h_2+\dots+h_p=n\} .\]
\end{defi}

Note that each nim-value in this set, $\mathcal{G}(O_n) = \mathcal{G}(h_1) \oplus
\mathcal{G}(h_2) \oplus \dots \oplus \mathcal{G}(h_p),$  is still calculated recursively according the actual rules of the game, that is, using the whole cut-set $\mathcal{C}$.  As an initial observation, note that $\mathcal{N}(n,1,\mathcal{C}) = \{\mathcal{G}_{\mathcal{C}}(n)\}$.  Furthermore, if $\mathcal{C} = \{c_1, c_2, \dots, c_p\}$, then $  \mathcal{G}_{\mathcal{C}}(n) = \mex\{ \cup_{i=1}^p \;\  \mathcal{N}(n, c_i+1 ,\mathcal{C})   \}$.

\begin{table}[h!]
\begin{center}
\begin{tabular}{c | c l c l c l c l c l c  }
 $n$ & 1 & 2 & 3 & 4 & 5 & 6 & 7 & 8 & 9 & 10 & 11 \\ \hline
 $\mathcal{N}(n,2,\{1,6\})$& - & \{0\} & \{1\} & \{0\} & \{1\} & \{0\} & \{1\} & \{0,2\} 
 & \{1,3\} & \{0,2\} & \{1,3\} \\
 $\mathcal{N}(n,7,\{1,6\})$& - & -&-&-&-&-&\{0\} & \{1\} & \{0\} & \{1\} & \{0\}  \\
  $\mathcal{G}_{\{1,6\}}(n)$& 0 & 1&0&1&0&1&2 & 3 & 2 & 3 & 2 
\end{tabular}
\begin{tabular}{c | cl c l c l c l c l c ccc c  c }
 $n$ & 12 & 13 & 14 & 15 & 16 & 17 & 18 & 19 \\ \hline
 $\mathcal{N}(n,2,\{1,6\})$& \{0,2\} & \{3\} & \{0,1,2\} & \{0,1,3,4\} & \{0,1,2,5\} & \{0,1,3,4\} & \{0,1,2,5\}
 & \{0,1,4\} \\
 $\mathcal{N}(n,7,\{1,6\})$& \{1\} & \{0,2\} & \{1,3\} & \{0,2\} & \{1,3\} & \{0,2\} & \{1,3\} & \{0,1,2\} \\
  $\mathcal{G}_{\{1,6\}}(n)$& 3 & 1&4&5&4&5&4 & 3
  \end{tabular}
\end{center}
\label{table-nimsets}
\caption{The first $19$ nim-sets of \textsc{CUT} for $\mathcal{C}=\{1,6\}$}
\end{table}

We will use $c=3$ as a motivating example in the next section, and as the base case for our eventual induction.  Thus let $\mathcal{C}=\{1,6\}$ and so $p = 2$ or $7$.  Table \ref{table-nimsets} shows the initial terms in the nim-sequence, decomposed into their nim-sets.

\section{Entering and exiting partitions}
When we further analyze the sequence of nim-sets for $p=2$, we find that it is made up of alternating subsequences.  See Table \ref{table-nimsets, decomposed}.

\begin{table}[!h]
\begin{tabular}{c | c c  c  c  c  c c c c c c c c c c}
 $n$ & 1 & 2 & 3 & 4 & 5 & 6 & 7 & 8 & 9 & 10 & 11 & 12 & 13 & 14 & 15\\ \hline
 $\mathcal{N}(n,2,\{1,6\})$& - & 0 & 1 & 0 & 1 & 0 & 1 & 0 & 1 & 0 & 1 & 0&&&\\
& & &  &  &  & &  & 2 & 3 & 2 & 3 & 2 & 3& 2 & 3\\
 &&&&&&&&&&&&&& 1 & 0\\
 &&&&&&&&&&&&&& 0 & 1\\
 &&&&&&&&&&&&&&  & 4\\
 \\
 $n$ & 16 & 17 & 18 & 19 & 20 & 21 & 22 & 23 & 24 & 25 & 26 & 27 & 28 & 29 & 30 \\ \hline
$\mathcal{N}(n,2,\{1,6\})$&&&&&&&&&&&&&&&\\
 & 2 & 3 & 2 &&&&&&&&&&&&\\
& 1 & 0 & 1 & 0 & &&&&&&&&&&\\
 & 0 & 1 &0 & 1 & 0 & 1 & 0 & 1 & 0 & &&&&&\\
& 5 & 4 & 5 & 4 & 5 & 4 & 5 & 4 & 5 & &&&&&\\
 &&&&& 3 & 2 & 3 & 2 & 3 & 2 & 3 & 2 & 3 & 2 & 3\\
&&&&& & 6 & 7 & 6 & 7 & 6 & 7 & 6 & 7 & 6 & 7\\
&&&&&&&&&&& 0 &&&&\\
&&&&&&&&&&& 1 & 0 & 1 & 0 & 1\\
&&&&&&&&&&& 4 &5&4&5&4\\
&&&&&&&&&&&&&0&1&0\\
\end{tabular}
\label{table-nimsets, decomposed}
\caption{The first $30$ nim-sets of \textsc{CUT} for $\mathcal{C}=\{1,6\}$, decomposed into alternating subsequences.}
\end{table}

This is quite striking!  There are underlying subsequences which alternate between $a$ and $a\oplus 1$.  Furthermore, these subsequences enter and exit the sequence at very particular partitions.  Here are the partitions where these subsequences begin and end: 
\begin{itemize}
\item The first subsequence begins at $O_2 = (1,1)$ and ends at $O_{12} = (6,6)$
\item The second subsequence begins at $O_8 = (7,1)$ and ends at $O_{18} = (12,6)$
\item The third subsequence begins at $O_{14} = (13,1)$ and ends at $O_{19} = (13,6)$
\item The fourth subsequence begins at $O_{14} = (7,7)$ and ends at $O_{24} = (12,12)$
\item The fifth subsequence begins at $O_{15} = (14,1)$ and ends at $O_{24} = (18,6)$
\item The sixth subsequence begins at $O_{20} = (13,7)$ or $O_{20} = (19,1)$ and ends at $O_{30} = (24,6)$
\item The seventh subsequence begins at $O_{21} = (14,7)$ and ends at $O_{30} = (18,12)$
\item The eighth subsequence begins \textbf{and} ends at $O_{26} = (13,13)$
\item The ninth subsequence begins at $O_{26} = (19,7)$ \ldots
\item The tenth subsequence begins at $O_{26} = (25,1)$ \ldots
\item The eleventh subsequence begins at $O_{28} = (14,14)$ \ldots
\end{itemize}
Continuing in this fashion, we find that each of these subsequences begins at a partition whose parts are all either $1\bmod 2c$ or $(4c+2)\bmod 12c$, and ends at a partition whose parts are all either $0\bmod 2c$ or $(4c+1)\bmod 12c$.  This motivates the following definition.
\begin{defi}  Fix a cut-set $\mathcal{C}=\{1,2c\}$.  Call a number $n>0$ an \emph{innumber} if $n=1\bmod 2c$ or $n=(4c+2)\bmod 12c$.  Then call an option $O_n=\{h_0,\ldots,h_d\}$ an \emph{entering partition} if each $h_i$ is an innumber.
Similarly, call a number $n>0$ an \emph{outnumber} if $n=0\bmod 2c$ or $n=(4c+1)\bmod 12c$.  Then call an option $O_n=\{h_0,\ldots,h_d\}$ an \emph{exiting partition} if each $h_i$ is an outnumber.  Finally, call an option $O_n=\{h_0,\ldots,h_d\}$ an \emph{intermediate partition} if it is neither an entering nor an exiting partition.
\end{defi}

Notice that the innumbers and outnumbers are the beginnings and ends of the rows from the table in Section \ref{section-rows}.  In fact, if we consider $(4c+1)\bmod 12c$ and then $(4c+2)\bmod 12c$ through $6c\bmod 12c$ as two separate rows, this is precise.

Different partitions of the same number can give the same nim-value.  This happens frequntly. For example, the first subsequence arises from either of the following sequences of partitions:
\[ (1,1),(2,1),(2,2),(3,2),(3,3),(4,3),(4,4),(5,4),(5,5),(6,5),(6,6) \] 
\[ (1,1),(2,1),(3,1),(4,1),(5,1),(6,1),(6,2),(6,3),(6,4),(6,5),(6,6)\]
More generally, if a partition is not entering, then its nim-value can be obtained recursively. In fact, this is the case for any $p\geq 2$:
\begin{lem}\label{rem1}Assume Theorem \ref{main target} holds up to $n$.
Suppose the option $O_{n+1} = (h_1,\dots, h_p)$ is not an entering partition.  Then there is an 
option $O'_n$ so that $\mathcal{G}(O_{n+1})= \mathcal{G}(O'_n) \oplus 1$.  Similarly, if $O_n= (j_1,\dots, j_p)$ is not an exiting partition, then there is an option $O'_{n+1}$ so that $\mathcal{G}(O'_{n+1})= \mathcal{G}(O_n) \oplus 1$.
\end{lem}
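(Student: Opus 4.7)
My approach is to exploit observation \eqref{ob1}, which states $\mathcal{G}(m+1) = \mathcal{G}(m)\oplus 1$ whenever $m\not\equiv 0 \bmod 2c$ and $m\not\equiv 4c+1 \bmod 12c$. Unpacking the definitions, this is precisely the condition that $m$ is not an outnumber, and equivalently that $m+1$ is not an innumber. So the plan is: in a given partition, identify one coordinate that may be shifted by $\pm 1$ without crossing a forbidden residue, shift only that coordinate, and let the unchanged coordinates cancel in the nim-sum so that the total nim-value moves by exactly $1$.

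For the first assertion, since $O_{n+1}=(h_1,\dots,h_p)$ is not an entering partition, some coordinate $h_i$ fails to be an innumber. In particular $h_i\not\equiv 1 \bmod 2c$, so $h_i\geq 2$ and the shifted tuple $O'_n := (h_1,\dots,h_i-1,\dots,h_p)$ is a legal partition of $n$ with the same number of parts. Applying \eqref{ob1} at $m=h_i-1$ (valid because $h_i\leq n$ when $p\geq 2$, and the inductive hypothesis gives us Theorem \ref{main target} up to $n$) yields $\mathcal{G}(h_i)=\mathcal{G}(h_i-1)\oplus 1$. Nim-summing with the untouched coordinates then gives $\mathcal{G}(O_{n+1})=\mathcal{G}(O'_n)\oplus 1$, as desired.

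The second assertion is dual. If $O_n=(j_1,\dots,j_p)$ is not an exiting partition, then some $j_i$ is not an outnumber, so the required hypothesis of \eqref{ob1} holds at $m=j_i$. Since $p\geq 2$ we have $j_i+1\leq n$, so the inductive hypothesis again lets us invoke \eqref{ob1} to get $\mathcal{G}(j_i+1)=\mathcal{G}(j_i)\oplus 1$. Setting $O'_{n+1}:=(j_1,\dots,j_i+1,\dots,j_p)$ and summing over the other parts completes the argument.

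There is really no hard step in this lemma: it is the partition-level restatement of \eqref{ob1}. The only bookkeeping is verifying that ``not entering'' (resp.\ ``not exiting'') exactly supplies a coordinate that avoids the forbidden residues for shifting down (resp.\ up), and that all relevant values of $\mathcal{G}$ lie within the inductive window.
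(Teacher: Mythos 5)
Your proof is correct and is essentially identical to the paper's: pick a coordinate that fails to be an innumber (resp.\ outnumber), shift it down (resp.\ up) by $1$, and apply \eqref{ob1} to that single coordinate while the others cancel in the nim-sum. Your added checks (that $h_i\geq 2$ so the shifted tuple is still a legal partition, and that all values stay within the inductive window) are sound bookkeeping the paper leaves implicit.
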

\begin{proof}
Choose $i$  where  $h_i$ is not an innumber.  Then $h_i-1$ is not an outnumber. Let $O'_n = (h_1,\ldots,h_i-1,\dots,h_p)$. 
\begin{align*}
\mathcal{G}(O_{n+1}) 
&= \mathcal{G}(h_1) \oplus \dots \oplus \mathcal{G}(h_i) \oplus \dots \oplus \mathcal{G}(h_p)& \\
& = \mathcal{G}(h_1) \oplus \dots \oplus [\mathcal{G}(h_i-1) \oplus 1] \oplus \dots \oplus \mathcal{G}(h_p) &\eqref{ob1}\\
& = \mathcal{G}(O'_n) \oplus 1&
\end{align*}
In the second case, choose $i$ so that $j_i$ is not an outnumber.  Then define $O'_{n+1} = (j_1,\ldots,j_i+1,\dots,j_p)$, and apply the same calculation.
\end{proof}

Extending this calculation, the nim-value of every non-entering partition is either the same as the greatest entering partition less than or equal to it, or off by $\oplus 1$.  To be precise, we have the following definition and corollary.

\begin{defi}\label{definition of floor}
Suppose $O_n=(h_1,\ldots,h_p)$ is a partition of $n$.  For each $1\leq i\leq p$, let $\left\lfloor h_i\right\rfloor$ be the greatest innumber less than or equal to $h_i$.  Then call $\left\lfloor O_n\right\rfloor=(\left\lfloor h_1\right\rfloor,\ldots,\left\lfloor h_p\right\rfloor)$ the \emph{floor} of $O_n$.  Similarly, let $\left\lceil h_i\right\rceil$ be the least outnumber greater than or equal to $h_i$, and call $\left\lceil O_n\right\rceil=(\left\lceil h_1\right\rceil,\ldots,\left\lceil h_p\right\rceil)$ the \emph{ceiling} of $O_n$.  Write $O_n\sim O'_{n'}$ if $\left\lfloor O_n\right\rfloor=\left\lfloor O'_{n'}\right\rfloor$.  For technical reasons, in the case that $\left\lfloor h_1\right\rfloor=\left\lfloor h_2\right\rfloor=\left\lfloor h_3\right\rfloor = 2 \bmod 2c$, we set $\left\lfloor (h_1,h_2,h_3)\right\rfloor=(\left\lfloor h_1\right\rfloor-1,\left\lfloor h_2\right\rfloor-1,\left\lfloor h_3\right\rfloor)$.
\end{defi}

Note that $\sim$ defines an equivalence relation on all partitions, and that the floor and ceiling partitions are always entering and exiting partitions, respectively.  Then we have the following.  To check the exceptional case, note that $$\mathcal{G}(12cq_1+4c+2,12cq_2+4c+2,12cq_3+4c+2)=\mathcal{G}(12cq_1+4c+1,12cq_2+4c+1,12cq_3+4c+2).$$
\begin{cor}\label{height calculation}Assume Theorem \ref{main target} holds up to $\max\{n,n'\}$.
Suppose $O_n\sim O'_{n'}$. If $n= n' \bmod 2$, then $\mathcal{G}(O_n)=\mathcal{G}(O'_{n'})$.  Otherwise, $\mathcal{G}(O_n)=\mathcal{G}(O'_{n'})\oplus 1$.  

Conversely, suppose $O_n$ is a partition of $n$, $\left\lfloor O_n\right\rfloor$ is a partition of $a$, and $\left\lceil O_n\right\rceil$ is a partition of $b$.  If $a\leq n'\leq b$, and $n'= n\bmod 2$, then there is a partition $O'_{n'}$ of $n'$ with the same number of parts as $O_n$ satisfying $\mathcal{G}(O'_{n'})=\mathcal{G}(O_n)$.
\end{cor}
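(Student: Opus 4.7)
For Part 1, my plan is a strong induction on $n + n'$. The base case is $n = n' = a$, where $O_n = O'_{n'}$ equals the common floor, so nim-values trivially coincide and $(n-n') \bmod 2 = 0$. For the inductive step, assume $n + n' > 2a$ and, without loss of generality, $n > a$. Then some coordinate $h_i$ of $O_n$ satisfies $h_i > \lfloor h_i \rfloor$, so $h_i$ is not an innumber (since $\lfloor h_i \rfloor$ is by definition the greatest innumber not exceeding $h_i$), and $O_n$ is not an entering partition. Lemma \ref{rem1} then produces $O''_{n-1}$, obtained by replacing $h_i$ with $h_i - 1$, satisfying $\mathcal{G}(O_n) = \mathcal{G}(O''_{n-1}) \oplus 1$. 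Because $\lfloor h_i \rfloor \leq h_i - 1$ is still strictly less than the next innumber above $\lfloor h_i \rfloor$, we have $\lfloor h_i - 1 \rfloor = \lfloor h_i \rfloor$, and hence $O''_{n-1} \sim O'_{n'}$. The inductive hypothesis applied to $(O''_{n-1}, O'_{n'})$ gives $\mathcal{G}(O''_{n-1}) = \mathcal{G}(O'_{n'}) \oplus ((n - 1 - n') \bmod 2)$, and XORing once more with $1$ produces the stated identity.

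For Part 2, for each $i$ let $R_i$ denote the largest integer whose floor equals $\lfloor h_i \rfloor$, namely one less than the next innumber above $\lfloor h_i \rfloor$. A direct check against the list of innumbers and outnumbers shows $R_i \geq \lceil h_i \rceil$, so $\sum_i R_i \geq b \geq n' \geq a = \sum_i \lfloor h_i \rfloor$. A stars-and-bars argument then produces integers $h'_i \in [\lfloor h_i \rfloor, R_i]$ summing to $n'$. By construction $\lfloor h'_i \rfloor = \lfloor h_i \rfloor$ for each $i$, so $O'_{n'} := (h'_1, \dots, h'_p)$ satisfies $O'_{n'} \sim O_n$; Part 1, combined with $n' \equiv n \pmod 2$, then yields $\mathcal{G}(O'_{n'}) = \mathcal{G}(O_n)$.

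The principal obstacle is the exceptional clause in the floor definition. There the adjusted floor $(12cq_1+4c+1, 12cq_2+4c+1, 12cq_3+4c+2)$ is not an entering partition (two of its parts are outnumbers), so the inductive descent in Part 1 cannot literally terminate at it. However, within the relevant range $[12cq_i+4c+2, 12cq_i+6c]$ the only innumber is $12cq_i+4c+2$, so the descent instead always terminates at the element-wise floor $F^\ast = (12cq_1+4c+2, 12cq_2+4c+2, 12cq_3+4c+2)$. Both partitions in a given equivalence class descend to this common $F^\ast$, the total parity of decrements is still $(n-n') \bmod 2$, and the identity $\mathcal{G}(F^\ast) = \mathcal{G}(\text{adjusted floor})$ supplied in the corollary statement reconciles the two representatives. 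For Part 2 in the same regime, small values of $n'$ (below the element-wise floor sum $a+2$) are handled by explicitly taking $O'_{n'}$ to be the adjusted floor or a symmetric permutation thereof, again justified by the same identity.
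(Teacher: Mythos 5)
Your argument matches the route the paper intends: the paper offers no explicit proof of this corollary beyond pointing back to Lemma \ref{rem1} and recording the identity needed for the exceptional case, and your induction (descend one coordinate at a time via \eqref{ob1}, tracking parity until every part sits at its floor) together with the stars-and-bars selection for the converse is exactly that argument made precise. Two details in your treatment of the exceptional clause of Definition \ref{definition of floor} need tightening, though. First, it is not true that both members of an equivalence class descend to the element-wise floor $F^\ast$: a partition whose element-wise floors are literally $(12cq_1+4c+1,\,12cq_2+4c+1,\,12cq_3+4c+2)$ is not exceptional and its descent terminates at that (entering) tuple, while an exceptional partition's descent terminates at $F^\ast$; the quoted identity is needed precisely to bridge these two terminal points, and the parity bookkeeping survives because their totals differ by $2$. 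Second, in Part 2 with $n'=a+1$ (which occurs whenever $n\equiv a+1 \bmod 2$ in the exceptional regime), no permutation of the adjusted floor has sum $a+1$; you need a genuinely different witness such as $(12cq_1+4c+1,\,12cq_2+4c+1,\,12cq_3+4c+3)$, whose nim-value is $\mathcal{G}(F^\ast)\oplus 1$ by \eqref{ob1}, which is what Part 2 demands there. With those two repairs the proof is complete.
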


Using only Lemma \ref{rem1}, it might seem that the sixth subsequence $3,4,3,4,\ldots$ which begins at $O_{20}=(13,7)$ would end at $O_{25}=(13,12)$.  But we saw above that it ends at some $O_{30}$.  This is because the exiting partition $O_{25} = (13,12)$ has the same nim-value as, for example, the non-exiting partition $O_{25} = (19,6)$, and thus Lemma \ref{rem1} implies that the subsequence continues on to $O_{30} = (24,6)$.  The next result shows us that, for $p\geq 4$, this replacement is always possible. 
\begin{lem}\label{one}Assume Theorem \ref{main target} holds up to $n$.
Suppose $p \geq 4$ and $\mathcal{C} = \{1,2c\}, c\geq 2.$  Then each exiting partition in $\mathcal{N}(n,p,\mathcal{C})$ has the same nim-value as some non-exiting partition in $\mathcal{N}(n,p,\mathcal{C})$.
\end{lem}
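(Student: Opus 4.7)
My plan is to produce the non-exiting partition $O'_n$ by a local modification of $O_n = (h_1, \ldots, h_p)$. By Proposition \ref{prop1}, I may work at the residue level modulo $12c$: each $h_i$ decomposes as $h_i = 12c\, k_i + r_i$ with residue $r_i$ in the outnumber set $\mathcal{R}_{\text{out}} = \{2c, 4c, 4c+1, 6c, 8c, 10c, 12c\}$, and it suffices to find a substitute residue tuple $(r'_1, \ldots, r'_p) \in [1, 12c]^p$ with the same sum and first-period nim-sum as $(r_1, \ldots, r_p)$ and containing at least one residue outside $\mathcal{R}_{\text{out}}$.

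My first attempt is a \emph{two-part substitution}: find a pair $(r'_i, r'_j)$ of the correct sum and nim-sum replacing $(r_i, r_j)$, containing at least one non-outnumber. Many pairs of outnumber residues admit such a substitution---notably pairs involving the hybrid residue $4c+1$---but some pairs are \emph{stuck}, meaning the only pair with the required sum and nim-sum is the original outnumber pair. Examples of stuck pairs include the three pairs summing to $14c$ with nim-sum $6$ from observation (\ref{ob2.1}), along with pairs such as $(2c, 2c)$ and $(2c, 4c)$.

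When every pair in the partition is stuck, I leverage the extra parts ($p \geq 4$) via three- or four-part substitutions from the identities (\ref{ob3.1})--(\ref{ob3.4}). Identity (\ref{ob3.2}), rewritten as $(n, n, 2ac) \leftrightarrow (n+1, n+1, 2ac-2)$, handles the case of a repeated residue $r_i = r_j \neq 12c$ together with any type-A outnumber $r_k$: since $2ac - 2$ is congruent to $2c - 2$ modulo $2c$, hence outside $\{0, 1\}$ modulo $2c$ for $c \geq 2$, the replacement $2ac - 2$ is never an outnumber. For $c \geq 4$, pigeonhole on $p = 2c+1 \geq 9$ parts across the $7$ outnumber residues guarantees such a repeat, so this case is handled uniformly. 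The boundary case $r_i = r_j = 12c$ (where $n+1 = 12c+1$ exits the first period) is resolved by identity (\ref{ob3.1}) applied to a suitable four-tuple.

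The main obstacle is organizing the finite collection of corner configurations---the all-distinct-residue configurations that can arise only for $c \in \{2, 3\}$ (where $p \leq 7$ matches the number of outnumber residues), and configurations with repeated residue $12c$ lacking a second repeated residue for (\ref{ob3.1}). These are dispatched by exhaustive case analysis from the first-period nim-values. For example, for $c = 3$ the unique all-distinct configuration $(6, 12, 13, 18, 24, 30, 36)$ admits the non-stuck pair swap $(12, 13) \to (3, 22)$; the remaining $c = 2$ configurations are verified analogously. The hardest step is the careful bookkeeping to ensure that a non-exiting substitute always exists at the residue level.
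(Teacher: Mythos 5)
Your overall strategy---reduce to residues modulo $12c$ via Proposition \ref{prop1}, then repair the residue tuple using the substitution identities \eqref{ob2.1} and \eqref{ob3.1}--\eqref{ob3.4}, exploiting a repeated residue whenever one exists---is essentially the paper's approach. However, there is a genuine gap in how you dispose of the configurations with \emph{no} repeated residue. You invoke pigeonhole on ``$p=2c+1\geq 9$ parts across the $7$ outnumber residues,'' and you assert that all-distinct configurations ``can arise only for $c\in\{2,3\}$.'' Both claims silently assume $p=2c+1$. The lemma is stated, and is needed (in Corollary \ref{two} and in the induction on $p$ in Lemma \ref{three}), for \emph{every} $p\geq 4$; the paper even remarks right after Corollary \ref{stick} that writing $p\geq 4$ rather than $p=2c+1$ is deliberate. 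For $p=4$ and arbitrary $c\geq 4$, an exiting partition can use four distinct outnumber residues chosen from the seven available ($2c,4c,4c+1,6c,8c,10c,12c$), so no repeat is forced, and your list of ``corner configurations'' (restricted to $c\in\{2,3\}$, and illustrated only with a $7$-part example) does not cover these cases.

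The paper closes exactly this hole by designating four parts $h_1,\dots,h_4$ and arguing only about their residues. When all four are distinct and congruent to $0\bmod 2c$, they are drawn from the six residues $2c,4c,6c,8c,10c,12c$, so by pigeonhole they contain one of the three pairs $\{2c,12c\}$, $\{4c,10c\}$, $\{6c,8c\}$ of \eqref{ob2.1}; since each such pair has nim-sum $6$, swapping the present pair for another of the three creates a repeated residue, after which \eqref{ob3.2} applies. The residue $4c+1$ is then handled by a short list of explicit two- and four-term identities (e.g.\ $\mathcal{G}(4c+1)\oplus\mathcal{G}(4c)=\mathcal{G}(6c+1)\oplus\mathcal{G}(2c)$). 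If you replace your $p=2c+1$ pigeonhole with this pair-pigeonhole on four parts and supply the $4c+1$ identities, your argument goes through; as written, the all-distinct case for $p=4$ and $c\geq 4$ is unproved.
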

So for $p\geq 4$, once one of these subsequences starts, it never ends.  Formally:
\begin{cor}\label{stick}Assume Theorem \ref{main target} holds up to $n$.
For $p \geq 4, c \geq 2$, if $v \in \mathcal{N}(n,p,\{1, 2c\})$, then $v \oplus 1 \in \mathcal{N}(n+1,p,\{1, 2c\})$.
\end{cor}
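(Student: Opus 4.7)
The plan is to combine Lemma \ref{rem1} (which produces the desired $\oplus 1$ shift from any non-exiting partition) with Lemma \ref{one} (which lets us replace an exiting partition by a non-exiting one of the same nim-value when $p\geq 4$). Both lemmas are available under the same inductive hypothesis that Theorem \ref{main target} holds up to $n$, so no additional machinery is needed.

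Concretely, suppose $v\in\mathcal{N}(n,p,\{1,2c\})$ and fix a partition $O_n=(h_1,\dots,h_p)$ of $n$ into $p$ parts realizing $\mathcal{G}(O_n)=v$. I would split into two cases according to whether $O_n$ is an exiting partition. If $O_n$ is \emph{not} an exiting partition, Lemma \ref{rem1} directly produces an option $O'_{n+1}$ of $n+1$ with the same number $p$ of parts satisfying
\[
\mathcal{G}(O'_{n+1})=\mathcal{G}(O_n)\oplus 1 = v\oplus 1,
\]
so $v\oplus 1\in\mathcal{N}(n+1,p,\{1,2c\})$.

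If instead $O_n$ is an exiting partition, then since $p\geq 4$, Lemma \ref{one} supplies a non-exiting partition $O''_n$ of $n$ into $p$ parts with $\mathcal{G}(O''_n)=\mathcal{G}(O_n)=v$. Now apply the previous case to $O''_n$: Lemma \ref{rem1} yields an option $O'_{n+1}$ with $\mathcal{G}(O'_{n+1})=v\oplus 1$, and again $v\oplus 1\in\mathcal{N}(n+1,p,\{1,2c\})$.

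There is no real obstacle to overcome here: all the work has already been absorbed into Lemma \ref{one}, whose role is precisely to prevent the argument from terminating at an exiting partition. The corollary is essentially a two-line composition of the two preceding lemmas, and its statement is simply the formalization of the ``once a subsequence starts, it never ends'' slogan discussed before the lemma.
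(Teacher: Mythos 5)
Your proof is correct and is exactly the intended argument: the paper states this corollary without proof as an immediate consequence of Lemma \ref{rem1} and Lemma \ref{one}, and your two-case composition (apply Lemma \ref{one} first if the witness partition is exiting, then Lemma \ref{rem1}) is precisely that consequence spelled out.
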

  It may seem overly general that we write $p\geq 4$ rather than $p=2c+1$, since $\mathcal{C}=\{1,2c\}$.  However, the proof of the main result will use an induction that involves all values of $p\geq 1$.

\begin{proof}[proof of Lemma \ref{one}]  Let $O_n=(h_1,h_2,h_3,h_4,\ldots,h_p)$ be an exiting partition.  We will find a non-exiting partition $O'_n=(h'_1,h'_2,h'_3,h'_4,\ldots,h'_p)$ with the same nim-value.  In fact, we will only alter the remainders of $h_1,h_2,h_3,h_4$ modulo $12c$, so Proposition \ref{prop1} assures us that we only need to check that the nim-sum of the nim-values of these remainders remains unaltered.
The seven outnumbers less than or equal to $12c$ are precisely $2c, 4c, 4c+1, 6c, 8c, 10c, 12c$.

\textbf{Case 1:} $h_1, h_2, h_3, h_4$ are all congruent to $0 \bmod 2c$. 

If there is a repeated value, then apply 
\eqref{ob3.2} to get a non-exiting partition
with the same nim-value. If not,
then by the pigeonhole principle, one of the pairs $\{2c,12c\}$, $\{4c,10c\}$, $\{6c,8c\}$ must be present, so we can apply \eqref{ob2.1} to get a repeated value.
 
\textbf{Case 2:} $h_1 = 4c+1$ and  $h_2, h_3, h_4 = 0 \bmod 2c$.

Again, if there is a repeated value, we can apply \eqref{ob3.2}, so assume not.

If one of the values $4c, 6c$ or $12c$ are present, we can adjust as follows to abtain a non-exiting partition:
\[ \mathcal{G}(4c+1) \oplus \mathcal{G}(4c) = 2 =  \mathcal{G}(6c+1) \oplus \mathcal{G}(2c)   \]
\[ \mathcal{G}(4c+1) \oplus \mathcal{G}(6c) = 5 =  \mathcal{G}(8c+1) \oplus \mathcal{G}(2c)  \]
\[ \mathcal{G}(4c+1) \oplus \mathcal{G}(12c)  = 6 =  \mathcal{G}(10c) \oplus \mathcal{G}(6c+1)\]
Otherwise $(h_2, h_3, h_4)$ must be $(2c, 8c, 10c)$. In this case, apply
\[ \mathcal{G}(4c+1) \oplus \mathcal{G}(2c)  \oplus \mathcal{G}(8c) \oplus \mathcal{G}(10c) 
= 7 =  \mathcal{G}(6c+1) \oplus \mathcal{G}(6c) \oplus \mathcal{G}(6c) \oplus \mathcal{G}(6c)
. \]

\textbf{Case 3:} Exactly two or three of $h_1, h_2, h_3, h_4$ are $h_i = 4c+1$.

Apply \eqref{ob3.2}.
 
\textbf{Case 4:} $h_1=h_2=h_3=h_4=4c+1$.

Apply \eqref{ob3.1}.
\end{proof}

Applying the same technique, we find that, for $p\geq 4$, these subsequences really only begin at entering partitions which have a part of size $1$.

\begin{lem}\label{entering lemma}
Assume Theorem \ref{main target} holds up to $n$.
Suppose $p \geq 4$ and $\mathcal{C} = \{1,2c\}, c\geq 2.$  Then each entering partition in $\mathcal{N}(n,p,\mathcal{C})$ without a part of size $1$ has the same nim-value as some non-entering partition in $\mathcal{N}(n,p,\mathcal{C})$.
\end{lem}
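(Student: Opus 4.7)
The plan is to mirror the proof of Lemma \ref{one} structurally, trading the roles of innumber and outnumber and replacing the key observation \eqref{ob3.2} by its two analogs \eqref{ob3.3} and \eqref{ob3.4}. Let $O_n=(h_1,\ldots,h_p)$ be an entering partition with no part equal to $1$. By Proposition \ref{prop1}, it suffices to alter only the remainders $r_1,r_2,r_3,r_4$ of $h_1,\ldots,h_4$ modulo $12c$; each lies in the innumber set $\{1,2c+1,4c+1,4c+2,6c+1,8c+1,10c+1\}$, and any replacement $(r'_1,r'_2,r'_3,r'_4)$ must preserve both the sum and the nim-sum of $\mathcal{G}$-values.

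I will split into four cases by the number of $r_i$ equal to the special innumber $4c+2$, paralleling Lemma \ref{one}'s split on the special outnumber $4c+1$. Case 4 (all four equal $4c+2$) is immediate from \eqref{ob3.1} with $n=m+1=4c+2$, producing the non-entering tuple $(4c+3,4c+3,4c+1,4c+1)$. Cases 2 and 3 (one, two, or three of the remainders equal $4c+2$) shift a single $4c+2$ to the non-innumber $4c+4$ via \eqref{ob3.4}, paired with a $\pm 1$ swap on two equal $r$-values elsewhere. In Case 1 (none equal $4c+2$), every $r_i$ is of the form $2a_ic+1$; if some two coincide at $2bc+1$ with $b\geq 1$ and a third value $r_k\neq 4c+1$ is also present, \eqref{ob3.3} converts $(r_i,r_j,r_k)$ to $(2bc,2bc,2ac+3)$, all non-innumbers. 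The residual sub-subcases of Case 1 (all $r_i$ distinct, or the only repeated value is $r_i=r_j=1$) are where the argument becomes delicate.

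Two obstacles stand out. First, the innumber analog of \eqref{ob2.1} is incomplete: the pairs $(1,10c+1)$ and $(2c+1,8c+1)$ do share nim-sum $6$ and sum $10c+2$ and are freely interchangeable, but the third natural pair $(4c+1,6c+1)$ has nim-sum $2$. For configurations such as $\{r_1,r_2,r_3,r_4\}=\{1,4c+1,6c+1,10c+1\}$, no single pair-swap yields a repeat, so I expect to need a more elaborate multi-position shift---likely one that introduces $4c+2$ into the output partition even though it was absent from the input, exploiting identities of the form $\mathcal{G}(4c+2)\oplus\mathcal{G}(4c+1)=5$. Second, the innumber $r_i=1$ admits no downward shift within $[1,12c]$; the hypothesis that no part equals $1$ is exactly what gives $k_i\geq 1$ whenever $r_i=1$, letting us invoke observations in absolute terms even when Proposition \ref{prop1} is inapplicable---for example, transforming $(12c+1,12c+1,12c+1,12c+1)$ into $(12c,12c,12c+2,12c+2)$ via \eqref{ob3.1}. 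Nailing down the complete case structure around these two obstacles is the step I expect to be the most technically demanding.
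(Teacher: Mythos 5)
Your skeleton matches the paper's: restrict attention to four of the parts, reduce modulo $12c$ via Proposition \ref{prop1}, and dispatch configurations with repeated remainders via \eqref{ob3.1}, \eqref{ob3.3}, and \eqref{ob3.4}. But the hard core of the lemma is precisely the part you leave open --- the configurations in which the four remainders are pairwise distinct innumbers --- and your diagnosis of that case is partly wrong. You assert that for $\{1,4c+1,6c+1,10c+1\}$ no single pair-swap yields a repeat; in fact the available swaps are not only the \eqref{ob2.1}-style pairing $\{1,10c+1\}\leftrightarrow\{2c+1,8c+1\}$ (nim-sum $6$), but also $\{1,6c+1\}\leftrightarrow\{2c+1,4c+1\}$ (nim-sum $3$, both pairs summing to $6c+2$) and $\{4c+1,10c+1\}\leftrightarrow\{6c+1,8c+1\}$ (nim-sum $7$, both pairs summing to $14c+2$). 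Swapping $\{1,6c+1\}$ for $\{2c+1,4c+1\}$ in your example produces the repeat $4c+1,4c+1$, after which \eqref{ob3.3} finishes. These two extra identities (the paper's Cases 3.1a and 3.1b) are exactly what make the all-distinct, all-$1\bmod 2c$ case go through; your proposed alternative of introducing $4c+2$ via $\mathcal{G}(4c+2)\oplus\mathcal{G}(4c+1)=5$ is neither needed there nor shown to work.

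Second, your case split by the number of remainders equal to $4c+2$ has an unflagged hole: when exactly one remainder is $4c+2$ and the other three are pairwise distinct, there are no ``two equal $r$-values elsewhere'' for \eqref{ob3.4} to act on. This is where the paper needs its most ad hoc computations (Cases 3.2a--3.2e), e.g.\ $\mathcal{G}(4c+2)\oplus\mathcal{G}(4c+1)=5=\mathcal{G}(8c+2)\oplus\mathcal{G}(1)$ and the four-term identity $\mathcal{G}(4c+2)\oplus\mathcal{G}(1)\oplus\mathcal{G}(2c+1)\oplus\mathcal{G}(10c+1)=0=\mathcal{G}(10c)\oplus\mathcal{G}(6c)\oplus\mathcal{G}(4)\oplus\mathcal{G}(1)$, each of which must preserve both the sum and the nim-sum. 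Without exhibiting these (or equivalent) identities and verifying that they exhaust the remaining configurations, the proof is incomplete. Your point about $r_i=1$ admitting no downward shift and the role of the hypothesis $h_i>1$ is correct and matches the paper's own remark, but that is the easy obstacle; the distinct-remainder identities are the missing substance.
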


\begin{proof}
Suppose $O_{n}=(h_1,h_2,h_3,h_4,\ldots, h_p)$ is an entering partition so that $h_1,h_2,h_3,h_4>1$.  We will find a non-entering partition $O'_n=(h'_1,h'_2,h'_3,h'_4,\ldots, h'_p)$ with the same nim-value.  As in the previous proof, we will only alter the remainders of $h_1,h_2,h_3,h_4$ modulo $12c$, so Proposition \ref{prop1} assures us that we only need to check that the nim-sum of the nim-values of these remainders remains unaltered.
The seven innumbers less than or equal to $12c$ are precisely $1, 2c+1, 4c+1, 4c+2, 6c+1, 8c+1, 10c+1$.  In some of the cases below, we replace $h_i$ with $h_i - 1$.  So even though it may be that $h_i=1\bmod 2c$, we do require $h_i>1$.

\textbf{Case 1:} $h_1=h_2$ and $h_3=h_4$.  

Apply \eqref{ob3.1}.

\textbf{Case 2:} $h_1=h_2$  and $h_3 \neq h_4$.  

Then one of $h_3$ or $h_4$ is not $4c+1$, say $h_3$.  Apply \eqref{ob3.3} if $h_3=1\bmod 2c$  or \eqref{ob3.4}  if $h_3=4c+2$.

\textbf{Case 3:}  $h_1, h_2, h_3, h_4$ are all different. 

\textbf{Case 3.1:} All $h_i$ are $1 \bmod 2c$.  We will show by subcases how to find an entering partition with the same nim-value that has repeated values.  Then Case 1 or Case 2 applies.

\textbf{Case3.1a:} $\{h_1,h_2\}=\{8c+1,10c+1\}$ then $h_3\in\{1,2c+1,4c+1,6c+1\}$.  We will be able to apply one of the following to obtain a repeated value of either $8c+1$ or $10c+1$:
\begin{align*}
\mathcal{G}(10c+1) \oplus \mathcal{G}(1) &= 6 = \mathcal{G}(8c+1) \oplus \mathcal{G}(2c+1)  \\
\mathcal{G}(10c+1) \oplus \mathcal{G}(4c+1) &= 7 = \mathcal{G}(8c+1) \oplus \mathcal{G}(6c+1) 
\end{align*}

\textbf{Case3.1b:} $h_1,h_2,h_3\in\{1,2c+1,4c+1,6c+1\}$.  Then apply:
\[\mathcal{G}(6c+1) \oplus \mathcal{G}(1) = 3 = \mathcal{G}(2c+1) \oplus \mathcal{G}(4c+1)\]

\textbf{Case 3.2:} $h_1=4c+2$.

\textbf{Case 3.2a:} $h_1=4c+2$, $h_2=4c+1$.  Apply 
\[\mathcal{G}(4c+2) \oplus \mathcal{G}(4c+1) = 5 = \mathcal{G}(8c+2) \oplus \mathcal{G}(1)\]

\textbf{Case 3.2b:} $h_1=4c+2$, $h_2=6c+1$.  Apply 
\[\mathcal{G}(4c+2) \oplus \mathcal{G}(6c+1) = 7 = \mathcal{G}(8c+2) \oplus \mathcal{G}(2c+1)\]

\textbf{Case 3.2c:} $(h_1,h_2, h_3, h_4) = (4c+2, 1, 8c+1, 10c+1)$ or $(h_1,h_2, h_3, h_4) = (4c+2, 2c+1, 8c+1, 10c+1)$.

Apply one of the calculations in Case 3.1a to obtain a repeated value of $8c+1$.

\textbf{Case 3.2d:}  $(h_1,h_2, h_3, h_4) = (4c+2, 1, 2c+1, 10c+1)$.  Apply
\[\mathcal{G}(4c+2) \oplus \mathcal{G}(1) \oplus \mathcal{G}(2c+1) \oplus \mathcal{G}(10c+1)= 0 = \mathcal{G}(10c) \oplus \mathcal{G}(6c)\oplus \mathcal{G}(4)\oplus \mathcal{G}(1)\]

\textbf{Case 3.2e:}  $(h_1,h_2, h_3, h_4) = (4c+2, 1, 2c+1, 8c+1)$.  Apply 
\[\mathcal{G}(4c+2) \oplus \mathcal{G}(1) \oplus \mathcal{G}(2c+1) \oplus \mathcal{G}(8c+1)= 2 = \mathcal{G}(8c) \oplus \mathcal{G}(3c+1)\oplus \mathcal{G}(3c+1)\oplus \mathcal{G}(3)\]
%
\end{proof}
Putting together everything we now know about entering and exiting partitions, we obtain a remarkable recursion on the nim-sets.

\begin{cor} \label{two}
Assume Theorem \ref{main target} holds up to $n$. For  $p \geq 4$ and $c \geq 2$,
\[ \mathcal{N}(n+1, p+1, \{1,2c\}) = \mathcal{N}(n, p, \{1,2c\}).\]
\end{cor}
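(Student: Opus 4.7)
The plan is to establish both inclusions of the equality $\mathcal{N}(n+1,p+1,\{1,2c\})=\mathcal{N}(n,p,\{1,2c\})$, with the nontrivial direction proceeding by induction on $n$. The easy inclusion $\mathcal{N}(n,p)\subseteq\mathcal{N}(n+1,p+1)$ is immediate: given $O_n=(h_1,\dots,h_p)$, the partition $(h_1,\dots,h_p,1)$ is in $\mathcal{O}_{n+1}$ with $p+1$ parts and has the same nim-value, because $\mathcal{G}(1)=0$.

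For $\mathcal{N}(n+1,p+1)\subseteq\mathcal{N}(n,p)$, let $v\in\mathcal{N}(n+1,p+1)$ be witnessed by $O_{n+1}=(h_1,\dots,h_{p+1})$. If some $h_i=1$, I delete it to obtain a $p$-partition of $n$ with nim-value $v$, and we are done. Otherwise every $h_i\geq 2$. If $O_{n+1}$ is an entering partition then, since it has no part of size $1$, Lemma \ref{entering lemma} supplies a non-entering partition of $n+1$ into $p+1$ parts with the same nim-value; I substitute this for $O_{n+1}$, finishing immediately if a part of size $1$ has appeared. Thus I may assume $O_{n+1}$ is non-entering with all parts at least $2$. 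Lemma \ref{rem1} now yields a partition $O'_n$ of $n$ into $p+1$ parts satisfying $\mathcal{G}(O'_n)=v\oplus 1$, so $v\oplus 1\in\mathcal{N}(n,p+1)$.

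The induction closes the argument: applying the corollary at the strictly smaller first argument $n-1$ (with the same $p\geq 4$) gives $\mathcal{N}(n,p+1)=\mathcal{N}(n-1,p)$, so $v\oplus 1\in\mathcal{N}(n-1,p)$, and then Corollary \ref{stick} produces $v=(v\oplus 1)\oplus 1\in\mathcal{N}(n,p)$. The base case is trivial for $n$ small enough that only all-ones partitions exist (for instance $n=p$ forces both sides to equal $\{0\}$). Since the combinatorial heavy lifting is already done in Lemmas \ref{one} and \ref{entering lemma}, I do not expect a serious obstacle. The main point to verify with care is that the hypotheses of each invoked lemma really do hold at the moment of use---namely, that we are entitled to apply Lemma \ref{entering lemma} (no part of size $1$) and then Lemma \ref{rem1} (non-entering)---and that the two $\oplus 1$ factors, one from Lemma \ref{rem1} and one from Corollary \ref{stick}, cancel exactly to restore the original nim-value $v$.
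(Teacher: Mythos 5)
Your proof is correct and follows essentially the same route as the paper's: the easy inclusion by appending a part of size $1$, and the reverse inclusion by stripping a $1$ or else applying Lemma \ref{entering lemma} then Lemma \ref{rem1} to descend to $\mathcal{N}(n,p+1)$, invoking the induction hypothesis at $n-1$, and re-ascending with an $\oplus 1$ that cancels. The only cosmetic difference is that you package the re-ascent as a single application of Corollary \ref{stick}, where the paper unpacks it inline as Lemma \ref{one} followed by Lemma \ref{rem1}.
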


\begin{proof}  

Fix $p \geq 4$.  We proceed by induction on $n$.  The base case is $n=p$, and we have
\[ \mathcal{N}(p, p, \{1,2c\})=\{\mathcal{G}((1_1,1_2,\ldots,1_p))\}=\{0\}=\{\mathcal{G}((1_1,1_2,\ldots,1_{p+1}))\}=\mathcal{N}(p+1, p+1, \{1,2c\}) \]

Next, fix $n>p$, and suppose $\mathcal{N}(n-1, p, \{1,2c\}) =  \mathcal{N}(n, p+1, \{1,2c\})$. Consider any $O_n$ with $p$ parts.  Appending a pile of size $1$ does not change the nim-value of an option, so we can define $O'_{n+1}$ with $p+1$ parts accordingly.  Hence $\mathcal{N}(n, p, \{1,2c\}) \subset  \mathcal{N}(n+1, p+1, \{1,2c\})$.

Next suppose $O_{n+1, p+1}$ is a partition of $n+1$ with $p+1$ parts.  If one of the parts is precisely $1$, then removing this part gives us a partition of $n$ with $p$ parts that has the same nim-value.  

Next assume $O_{n+1, p+1}$ has no parts of size $1$.  If it is an entering partition, apply Lemma \ref{entering lemma} to obtain a non-entering partition $O'_{n+1,p+1}$ of $n+1$ with $p+1$ parts so that $\mathcal{G}(O'_{n+1,p+1})=\mathcal{G}(O_{n+1,p+1})$.  Then apply Lemma \ref{rem1} to obtain a partition $O_{n,p+1}$ of $n$ with $p+1$ parts so that $\mathcal{G}(O'_{n+1,p+1})=\mathcal{G}(O_{n,p+1})\oplus 1$.  Then by the induction hypothesis, there is some partition $O_{n-1,p}$ of $n-1$ with $p$ parts satisfying $\mathcal{G}(O_{n-1,p})=\mathcal{G}(O_{n,p+1})$.  Next if $O_{n-1,p}$ is an exiting partition, apply Lemma \ref{one} to replace it with a non-exiting partition $O'_{n-1,p}$ with the same nim-value (and $p$ parts).  Then apply Lemma \ref{rem1} to obtain a partition $O_{n,p}$ of $n$ with $p$ parts satisfying $\mathcal{G}(O_{n,p})=\mathcal{G}(O'_{n-1,p})\oplus 1$. 
We obtain
\[ \mathcal{G}(O_{n+1,p+1})=\mathcal{G}(O'_{n+1,p+1})=\mathcal{G}(O_{n,p+1})\oplus 1=\mathcal{G}(O_{n-1,p})\oplus 1=\mathcal{G}(O'_{n-1,p})\oplus 1=\mathcal{G}(O_{n,p}), \]
And hence $\mathcal{N}(n+1, p+1, \{1,2c\})  \subset  \mathcal{N}(n, p, \{1,2c\})$.
\end{proof}

\section{Proof of the main result}

As mentioned above, \cite{par} verified Theorem \ref{main target} for $c=2,3,4,5$.   Our strategy is to show that for $c>3$, the nim-sequence for $\mathcal{C}=\{1,2c\}$ is a sort of `shifted-expanded version' of the nim-sequence for $\mathcal{C}=\{1,6\}$.  In fact, we will derive this from the same correspondence on the associated nim-sets.  Looking just at the statement of the theorem, we might think that there are a few ways we might define the correspondence.  However, when we look at a few examples of individual nim-sets, we see that our choices are limited.
Let's compare the Maple generated  nim-sets $\mathcal{N}(n, 3, \{1,6\})$ for $1\leq n\leq 14$:
\[ \{\;\}, \{\;\}, \{0\}, \{1\}, \{0\}, \{1\}, \{0\}, \{1\}, \{0, 2\}, \{1, 3\}, \{0, 2\}, \{1, 3\}, \{0, 2\}, \{1, 3\} \]
and $\mathcal{N}(n, 3, \{1,10\})$ for $1\leq n\leq 22$:
\[\{\;\}, \{\;\}, \{0\}, \{1\}, \{0\}, \{1\}, \{0\}, \{1\}, \{0\}, \{1\}, \{0\}, \{1\},\] 
\[\{0, 2\}, \{1, 3\}, \{0, 2\}, \{1, 3\}, \{0, 2\}, \{1, 3\}, \{0, 2\}, \{1, 3\}, \{0, 2\}, \{1, 3\}\]

That the first $2$ (i.e., $p-1$) terms are empty makes sense, since we cannot split a pile of size $< p$ into $p$ piles.  After that, we have the alternating subsequences that we saw earlier, in blocks of length $2c$.  However, there is a bit more subtlety elsewhere.  Let's compare $\mathcal{N}(n, 3, \{1,6\})$ for $15\leq n\leq 20$:
\[\{0, 1, 2\}, \{0, 1, 3, 4\}, \{0, 1, 2, 5\}, \{0, 1, 3, 4\}, \{0, 1, 2, 5\}, \{0, 1, 3, 4\}\]
with $\mathcal{N}(n, 3, \{1,10\})$ for $23\leq n\leq 32$:
  \[\{0, 1, 2\}, \{0, 1, 3, 4\}, \{0, 1, 2, 5\}, \{0, 1, 3, 4\}, \{0, 1, 2, 5\},\{0, 1, 3, 4\}, \]
  \[\{0, 1, 2, 5\}, \{0, 1, 3, 4\}, \{0, 1, 2, 5\}, \{0, 1, 3, 4\}\]
Here we see that the terms in position $2cq+1+(p-1)$ might be irregular.  Next compare $\mathcal{N}(n, 2, \{1,6\})$ for $21\leq n\leq 26$:
\[\{1, 2, 4, 6\}, \{0, 3, 5, 7\}, \{1, 2, 4, 6\}, \{0, 3, 5, 7\}, \{2, 6\}, \{0, 1, 3, 4, 7\}\]
with $\mathcal{N}(n, 2, \{1,10\})$ for $33\leq n\leq 42$:
\[\{1, 2, 4, 6\}, \{0, 3, 5, 7\}, \{1, 2, 4, 6\}, \{0, 3, 5, 7\},\]
\[\{1, 2, 4, 6\}, \{0, 3, 5, 7\}, \{1, 2, 4, 6\}, \{0, 3, 5, 7\},\{2, 6\}, \{0, 1, 3, 4, 7\}\]
Here we see that the terms in positions $2cq-1 + (p-1)$ and $2cq + (p-1)$ might be irregular.  Putting this all together, we define our corresondence as follows:

\begin{defi}
Given $1 \leq r \leq 2c$, define 
\[r'=
\begin{cases}
1&\text{ if }r=1\\
2&\text{ if }r=2\\
3&\text{ if }2<r<2c-1\text{ and }r\text{ is odd}\\
4&\text{ if }2<r<2c-1\text{ and }r\text{ is even}\\
5&\text{ if }r=2c-1\\
6&\text{ if }r=2c
\end{cases}
\]
In other words,
\begin{center}
\begin{tabular}{c | cl c l c l c l c l c ccc c  }
 $r$ & 1 & 2 & 3 & 4 & 5 & 6 & 7 & \dots & $2c-3$ & $2c-2$& $2c-1$ & $2c$ \\ \hline
 $r'$& 1 & 2 & 3 & 4 & 3 & 4 & 3 & \dots & 3 & 4& 5 & 6  
\end{tabular}
\end{center}
Next suppose $p\geq 1$, $c \geq 3$, and $n \geq p$.  Write $n=k+p-1=2cq+r+p-1$  with $1\leq r\leq 2c$.
Then for each $p\geq 1$, $n\geq p$, set 
\[\phi_p(n) = \phi_p(2cq+r+p-1) = 6q+r'+p-1\]
\end{defi}
Since \cite{par} proved Theorem \ref{main target} for $c=2,3$, the following theorem finishes the proof of Theorem \ref{main target}.
\begin{thm}\label{thm5}  For $c\geq 3$
\[ \mathcal{G}_{\{1,2c\}}(n) = \mathcal{G}_{\{1,6\}}(\phi_1(n)).\]
\end{thm}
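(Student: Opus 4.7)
The plan is to strengthen the statement and prove simultaneously, by strong induction on $n$, both Theorem \ref{thm5} and the multi-pile nim-set correspondence
\[ \mathcal{N}(n, p, \{1, 2c\}) = \mathcal{N}(\phi_p(n), p, \{1, 6\}) \qquad \text{for } p=2 \text{ or } p\geq 4,\ n\geq p. \]
The base cases are values of $n$ inside the first period, where both sides can be read off from Theorem \ref{main target} (verified in \cite{par} for $c\leq 5$, and in particular for $c=3$, which fully pins down the $\{1,6\}$ side). The induction is well-founded because the multi-pile correspondence at $n$ uses the single-pile identity only at values strictly below $n$.

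For the inductive step on Theorem \ref{thm5}, start from $\mathcal{G}_{\{1,2c\}}(n)=\mex\bigl(\mathcal{N}(n,2,\{1,2c\})\cup\mathcal{N}(n,2c+1,\{1,2c\})\bigr)$. Applying Corollary \ref{two} repeatedly in the $\{1,2c\}$ game collapses $\mathcal{N}(n,2c+1,\{1,2c\})$ to $\mathcal{N}(n-(2c-6),7,\{1,2c\})$, and a direct $\phi$-computation gives $\phi_7(n-(2c-6))=\phi_1(n)$. Combined with the multi-pile correspondence at $p=7$ this yields $\mathcal{N}(n,2c+1,\{1,2c\})=\mathcal{N}(\phi_1(n),7,\{1,6\})$. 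Similarly the correspondence at $p=2$ gives $\mathcal{N}(n,2,\{1,2c\})=\mathcal{N}(\phi_2(n),2,\{1,6\})$. A short calculation shows $\phi_2(n)-\phi_1(n)=2$ exactly when the row-index $r$ of $n$ is odd with $5\leq r\leq 2c-3$; in those cases the explicit first-period $\{1,6\}$ nim-sequence confirms $\mathcal{N}(\phi_2(n),2,\{1,6\})=\mathcal{N}(\phi_1(n),2,\{1,6\})$, so the $\mex$ of the union matches $\mathcal{G}_{\{1,6\}}(\phi_1(n))$ as required.

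The bulk of the argument is the inductive step for the multi-pile correspondence. Given a partition $O_n=(h_1,\ldots,h_p)$ of $n$ in the $\{1,2c\}$ game, the induction hypothesis on each $h_i<n$ gives $\mathcal{G}(O_n)=\bigoplus_i\mathcal{G}_{\{1,6\}}(\phi_1(h_i))$, which is the nim-value of the tuple $(\phi_1(h_1),\ldots,\phi_1(h_p))$ regarded as a partition in the $\{1,6\}$ game. Unfortunately this tuple sums to $\sum\phi_1(h_i)$, which need not be $\phi_p(n)$. We patch this by modifying each part within its row, using Proposition \ref{prop1} to reduce computations modulo the $\{1,6\}$ period and Corollary \ref{height calculation} (applied in the $\{1,6\}$ game, whose nim-sequence is already known) to express nim-values via floors and parities. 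This lets us swap each $\phi_1(h_i)$ for another value with the same floor and compatible parity, landing on a legal partition of exactly $\phi_p(n)$ with unchanged nim-value. The reverse inclusion proceeds symmetrically, lifting a partition of $\phi_p(n)$ in the $\{1,6\}$ game back to a partition of $n$ in the $\{1,2c\}$ game.

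The hard part will be the bookkeeping around the anomalous third-row entries $4c+1$ in the $\{1,2c\}$ game and $7$ in the $\{1,6\}$ game. These are responsible for both the special clause in Definition \ref{definition of floor} and the extra case splits in Lemmas \ref{one} and \ref{entering lemma}, and every partition featuring such a part on either side will demand a hand-crafted replacement leveraging observations \eqref{ob1}--\eqref{ob3.4}. The case analysis should closely mirror the proofs of those two lemmas, with the added burden of tracking the effect of $\phi$ on the floor structure.
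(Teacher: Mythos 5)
Your overall architecture matches the paper's: a joint induction proving the nim-set correspondence $\mathcal{N}(n,p,\{1,2c\})=\mathcal{N}(\phi_p(n),p,\{1,6\})$ alongside the single-pile identity, using Corollary \ref{two} to collapse $2c+1$ piles down to $7$, and floor/parity patching via Proposition \ref{prop1} and Corollary \ref{height calculation}. However, there is a genuine gap in your $p=2$ step. You assert that whenever $\phi_2(n)-\phi_1(n)=2$ (i.e.\ when $n\equiv r+1\bmod 2c$ with $r+1\in\{5,7,\dots,2c-3\}$), the first-period data confirms $\mathcal{N}(\phi_2(n),2,\{1,6\})=\mathcal{N}(\phi_1(n),2,\{1,6\})$. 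That equality is \emph{false} in exactly one row of the period: for $q=4$ one has $\mathcal{N}(29,2,\{1,6\})=\{0,1,2,5,6\}$ while $\mathcal{N}(27,2,\{1,6\})=\{0,2,5,6\}$, so $\mathcal{N}(n,2,\{1,2c\})$ picks up the extra value $1$ whenever $n=8c+a$ with $a\in\{5,7,\dots,2c-3\}$ (a nonempty range as soon as $c\geq 4$). Your argument as written would then compute the wrong $\mex$ at all such $n$. The repair requires an additional observation: the stray value $1$ already lies in $\mathcal{N}(\phi_1(n),2c+1\mapsto 7,\{1,6\})=\mathcal{N}(27,7,\{1,6\})$ (witnessed by the option $(13,3,3,2,2,2,2)$), so the union over both pile-counts, and hence the $\mex$, is unaffected. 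Without this absorption step the theorem does not follow for infinitely many $n$ once $c\geq 4$.

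A secondary concern: your patching scheme for the multi-pile correspondence leans on the floor machinery, but the analogue of Lemma \ref{map}(c) fails for $p\geq 5$ and part (d) is only available for $p\leq 3$, so the direct floor-and-parity argument cannot carry the $p=4$ case (e.g.\ when all four parts are $\equiv 2\bmod 2c$). The paper switches strategies there, using Corollary \ref{stick} to argue that alternating subsequences never exit for $p\geq 4$ and reducing to entering partitions with a part of size $1$; your sketch would need that (or an equivalent) to get $p=7$ off the ground.
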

As we said above, we first prove that the $\phi$ correspondence holds on the corresponding nim-sets.  This is quite surprising, as it is true for all $p$, not just $p=2$ and $p=2c+1$.
\begin{lem}\label{three}
For $c \geq 3$ and $k \geq 1$, $p \geq 2$,
\[  \mathcal{N}(k+p-1, p, \{1,2c\}) = \mathcal{N}(\phi_p(k+p-1),  p, \{1,6\}).\]
\end{lem}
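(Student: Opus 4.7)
The plan is an outer induction on $k$ combined with an inner reduction in $p$. The base of the outer induction, $k = 1$, has $n = p$ with the only partition being all ones, so both sides equal $\{0\}$. For the inner reduction, when $p \geq 5$ I apply Corollary \ref{two} to both cut-sets simultaneously. This is legitimate because Theorem \ref{main target} for $\{1,6\}$ is already established in \cite{par} and, for $\{1,2c\}$, it is available by the overall induction on $n$ being used to prove Theorem \ref{main target}. Since Corollary \ref{two} preserves the quantity $k = n - p + 1$, and since a direct check of the formula gives $\phi_p(n) - 1 = \phi_{p-1}(n-1)$ (both $q$ and $r$ are unchanged when dropping $n$ and $p$ together by one), this reduces the $(k, p)$ case to the $(k, p-1)$ case. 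Hence the work is to establish the lemma for the three base cases $p = 2, 3, 4$, by induction on $k$.

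For these base cases, the key tool is Corollary \ref{height calculation}: every partition is equivalent (up to $\oplus 1$ based on parity) to its floor, an entering partition. Thus $\mathcal{N}(n, p, \cdot)$ is determined by those entering partitions whose floor-ceiling range covers $n$, together with parity corrections. The seven innumber residues $\{1, 2c+1, 4c+1, 4c+2, 6c+1, 8c+1, 10c+1\}$ modulo $12c$ in $\{1, 2c\}$ correspond canonically to the seven residues $\{1, 7, 13, 14, 19, 25, 31\}$ modulo $12$ in $\{1, 6\}$; under this correspondence, matched innumbers sit in corresponding rows and hence carry identical nim-values. This gives a natural bijection between entering partitions in the two games, which I would then lift, via Corollary \ref{height calculation}, to the full set of partitions.

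The main obstacle is that this pile-by-pile innumber bijection does \emph{not} preserve partition sums: an entering partition of $n$ in $\{1, 2c\}$ typically maps to an entering partition in $\{1, 6\}$ whose total is not $\phi_p(n)$. To reconcile this, I plan to use Corollary \ref{height calculation} to slide each entering partition through its floor-ceiling range: if $F$ is a floor of sum $a$ with ceiling of sum $b$ in $\{1, 2c\}$, and $F^*$ the image floor with sums $a^*, b^*$ in $\{1, 6\}$, then I must verify that $\phi_p$ sends $[a, b] \cap \{n' : n' \equiv n \bmod 2\}$ into $[a^*, b^*] \cap \{\cdot \equiv \phi_p(n) \bmod 2\}$ and vice versa. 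This range-compatibility check, combined with the exceptional floor clause in Definition \ref{definition of floor} and judicious use of the nim-sum identities \eqref{ob2.1}--\eqref{ob3.4}, will require a substantial case analysis organized by the innumber types appearing in $F$. I expect the small cases $p = 2, 3$, where Lemmas \ref{one} and \ref{entering lemma} do not apply and so the floors and ceilings cannot be simplified away, to carry most of the technical weight.
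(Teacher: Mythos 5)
Your skeleton largely matches the paper's: reduce $p\geq 5$ to $p-1$ via Corollary \ref{two} (your identity $\phi_p(n)-1=\phi_{p-1}(n-1)$ is correct and is exactly what the paper uses), and for small $p$ work with floors/ceilings, the pile-wise innumber correspondence, and sliding via Corollary \ref{height calculation}. For $p=2,3$ this is the paper's Case 1, with your ``sum compatibility'' and ``range compatibility'' being precisely Lemma \ref{map}(c) and (d). One point you must make explicit: the assertion that matched innumbers ``carry identical nim-values'' is the statement $\mathcal{G}_{\{1,2c\}}(h)=\mathcal{G}_{\{1,6\}}(\phi_1(h))$, i.e.\ Theorem \ref{thm5} for $h<n$; this is not a consequence of the row structure alone but must be carried as a simultaneous induction hypothesis alongside the lemma, as the paper does, or the argument is circular.

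The genuine gap is at $p=4$, where the floor-and-slide mechanism you propose structurally fails and the paper switches to a different tool. Concretely, take $c=4$ and the entering partition $(18,18,18,18)$ (all parts $\equiv 4c+2$): its sum is $a=72$ and $\phi_4(72)=54$, but the pile-wise image $(14,14,14,14)$ sums to $56$ and its floor-to-ceiling range is $[56,72]$, which does not contain $54$. This is why Lemma \ref{map}(c) carries an explicit exception for four parts $\equiv 2\bmod 2c$ and why Lemma \ref{map}(d) is only stated for $p\leq 3$; no analogue of the exceptional clause in Definition \ref{definition of floor} rescues the four-part case, and nim-sum identities alone will not restore the range containment. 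The paper avoids this entirely: for $p=4$ it invokes Corollary \ref{stick} (for $p\geq 4$, once a value enters a nim-set the alternating pair never leaves), so one only needs to match the positions where values \emph{enter}; Lemma \ref{entering lemma} then reduces this to entering partitions containing a part of size $1$, which by construction excludes the all-$(4c+2)$ configuration, and there the pile-wise correspondence does behave. Your plan needs this (or an equivalent) replacement argument for $p=4$; as written, the ``substantial case analysis'' you defer to would run into a case it cannot close.
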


Before we prove this, we need some facts about $\phi$ for our base cases.

\begin{lem}\label{map}Fix $c\geq 3$ and $\mathcal{C}=\{1,2c\}$.
\begin{enumerate}
\item[(a)] For all $p$, $\phi_p$ preseves parity.

\item[(b)] The map $\phi_1$ induces bijections from the innumbers (outnumbers) of $\mathcal{C}=\{1,2c\}$ to the innumbers (outnumbers) of $\mathcal{C}=\{1,6\}$.

\item[(c)]  Suppose $2\leq p\leq 4$ and $h_1,\ldots,h_p$ are all innumbers or 
all outnumbers.  If it is not the case that $h_1=h_2=h_3=h_4=2\bmod 2c$, then 
\[ \phi_1(h_1)+\phi_1(h_2)+\dots+\phi_1(h_p) =  \phi_p(h_1+h_2+\dots+h_p). \]

\item[(d)] Suppose $O_n$ is a partition on $n$ with $p$ parts, and that $\left\lfloor O_n\right\rfloor$ and $\left\lceil O_n\right\rceil$ are partitions of $a$ and $b$, respectively.  If $1\leq p\leq 3$, then $\phi_p(a)\leq \phi_p(n)\leq \phi_p(b)$.   

\end{enumerate}
\end{lem}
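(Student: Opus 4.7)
The plan is to address each of the four parts in sequence, handling (a) and (b) quickly and spending more effort on (c) and (d).

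For (a), I would inspect the six defining rules for $r'$ and verify directly that $r \equiv r' \pmod{2}$ in each. Since $\phi_p(n) - n = (r' - r) + (6 - 2c)q$ has both summands even, this gives $\phi_p(n) \equiv n \pmod 2$. For (b), it suffices to check one period: the seven innumbers of $\mathcal{C} = \{1, 2c\}$ in $[1, 12c]$ are $\{1, 2c{+}1, 4c{+}1, 4c{+}2, 6c{+}1, 8c{+}1, 10c{+}1\}$, and direct computation gives their $\phi_1$-images $\{1, 7, 13, 14, 19, 25, 31\}$, which are exactly the innumbers of $\mathcal{C} = \{1, 6\}$ in $[1, 36]$. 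An analogous calculation handles outnumbers, and the relation $\phi_1(n + 12c) = \phi_1(n) + 36$ extends both bijections globally.

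For (c), I would write $h_i = 2c q_i + r_i$ with $r_i \in [1, 2c]$. An innumber forces $r_i \in \{1, 2\}$; an outnumber forces $r_i \in \{1, 2c\}$ (with the secondary residue $r_i = 1$ for outnumbers requiring $q_i \equiv 2 \bmod 6$). Letting $k$ count the parts with the secondary residue, $\sum h_i - (p-1)$ has a predictable residue $R \bmod 2c$, and case by case one verifies $R' + (p-1) = \sum r_i'$. Every case succeeds except innumbers with $p = 4$, $k = 4$, and $c \geq 4$: then $R = 5$ is odd and strictly between $2$ and $2c-1$, giving $R' = 3$ while $\sum r_i' = 8$. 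This is precisely the excluded case $h_1 = h_2 = h_3 = h_4 \equiv 2 \bmod 2c$.

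For (d), the pivotal structural fact is that $\phi_1$ restricted to each $\phi_1$-block $[2cq+1,\, 2c(q+1)]$ takes values in $[6q+1, 6q+6]$, attaining the minimum $6q+1$ at $r=1$ and the maximum $6q+6$ at $r=2c$, and that $\phi_1$ is block-monotonic (all values in block $q$ precede those in block $q+1$). I would case-analyze by which rows the $h_i$ occupy. Invoking (c) with $p \leq 3$ (so no exception applies), I express $\phi_p(a) = \sum \phi_1(\lfloor h_i \rfloor)$ and $\phi_p(b) = \sum \phi_1(\lceil h_i \rceil)$. Writing $a - (p-1) = 2c Q_a + R_a$ and $b - (p-1) = 2c Q_b + R_b$, the confinement $n - (p-1) \in [a - (p-1),\, b - (p-1)]$ restricts the residue $r$ so that the corresponding $r'$ is trapped between $R_a'$ and $R_b'$. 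The technical floor redefinition for $p = 3$ with all three $\lfloor h_i \rfloor \equiv 2 \bmod 2c$ shifts $a - 2$ from residue $4$ down to residue $2$; without this shift the lower bound fails, as one checks (for $c \geq 4$) that $\phi_3(12c+7) = 41$ while $\phi_3(a)$ would equal $42$ in the unshifted version.

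The principal obstacle is the case analysis in part (d), especially handling the split of row $3$ into row 3a (the singleton $\{4c+1\}$) and row 3b ($[4c+2, 6c]$), and verifying that the technical floor redefinition is exactly the correction needed to preserve the lower bound when $p = 3$ and all three parts lie in row 3b.
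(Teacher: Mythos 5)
Your proposal is correct and follows essentially the same route as the paper: parity of $r\mapsto r'$ for (a), a one-period check for (b), a residue-mod-$2c$ computation (the paper's Tables 3--5) for (c) that isolates the same unique exception ($p=4$, all parts $\equiv 2\bmod 2c$, $c\geq 4$), and for (d) the same endpoint argument via (c) plus residue trapping, with the same $p=3$ exceptional case resolved by the floor redefinition. The only nitpick is that your identity $R'+(p-1)=\sum r_i'$ in (c) needs an extra $6$ per wraparound past $2c$ in the outnumber case (e.g.\ two parts with residue $2c$), but your case-by-case framework clearly accounts for this via the block index.
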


\begin{proof}
To prove (a), note that the map $r\mapsto r'$ preserves parity.  

Part (b) is clear from the construction of $\phi_1$.

For (c) and (d), we first examine the effects of $\phi_2$, $\phi_3$, and $\phi_4$ on the remainders of a number mod $2c$.  The full calcuations are in Tables \ref{table1}, \ref{table2}, and \ref{table3}.  Recall that in the definition of $\phi_p$, the remainder of $n$ mod $2c$ is $s=r+p-1$.   The tables all assume that $c\geq 4$, since if $c=3$, then all maps $\phi_p$ are the identity, and the lemma holds trivially.
\begin{table}[ht]
\begin{center}
\begin{tabular}{c | cl c l c l c l c l c ccc}
  $s=r+1$ & 1 & 2 & 3 & 4 & 5 & 6 & 7 & \dots & $2c-3$ & $2c-2$& $2c-1$ & $2c$ \\ \hline
  $r$ & $2c$ & 1 & 2 & 3 & 4 & 5 & 6 & \dots & $2c-4$ & $2c-3$& $2c-2$ & $2c-1$ \\ 
  $r'$ & $6$ & 1 & 2 & 3 & 4 & 3 & 4 & \dots & $4$ & $3$& $4$ & $5$ \\ \hline
  $r'+1$& 1 & 2 & 3 & 4 & 5 & 4 & 5 & \dots & 5 & 4& 5 & 6  
\end{tabular}
\caption{The effect of $\phi_2$ on remainders mod $2c$}
\label{table1}
\end{center}
\end{table}

\begin{table}[ht]
\begin{center}
\begin{tabular}{c | cl c l c l c l c l c ccc}
  $s=r+2$ & 1 & 2 & 3 & 4 & 5 & 6 & 7 & \dots & $2c-3$ & $2c-2$& $2c-1$ & $2c$ \\ \hline
  $r$ & $2c-1$ & $2c$ & 1 & 2 & 3 & 4 & 5 & \dots & $2c-5$ & $2c-4$& $2c-3$ & $2c-2$ \\
  $r'$ & $5$ & $6$ & 3 & 4 & 3 & 4 & 3 & \dots & $3$ & $4$& $3$ & $4$ \\ \hline
  $r'+2$& 1 & 2 & 3 & 4 & 5 & 6 & 5 & \dots & 5 & 6& 5 & 6  
\end{tabular}
\caption{The effect of $\phi_3$ on remainders mod $2c$}
\label{table2}
\end{center}
\end{table}

\begin{table}[ht]
\begin{center}
\begin{tabular}{c | cl c l c l c l c l c ccc}
  $s=r+3$ & 1 & 2 & 3 & 4 & 5 & 6 & 7 & \dots & $2c-3$ & $2c-2$& $2c-1$ & $2c$ \\ \hline
  $r$ & $2c-2$ & $2c-1$ & $2c$ & 1 & 2 & 3 & 4 & \dots & $2c-6$ & $2c-5$& $2c-4$ & $2c-3$ \\ \hline
  $r'$ & $4$ & $5$ & $6$ & 1 & 2 & 3 & 4 & \dots & $4$ & $3$& $4$ & $3$ \\ \hline
  $r'+3$& 1 & 2 & 3 & 4 & 5 & 6 & 7 & \dots & 7 & 6& 7 & 6  
\end{tabular}
\caption{The effect of $\phi_4$ on remainders mod $2c$}
\label{table3}
\end{center}
\end{table}

For (c), suppose that $p=2$ and $h_1=2cq_1+s_1$ and  $h_2=2cq_2+s_2$ are innumbers.  Then $s_i=1\text{ or 2}$.  Thus $s_1+s_2=2,3,\text{ or }4$.  Thus Table \ref{table1} implies that 
\[\phi_2(h_1+h_2)=\phi_2(2c(q_1+q_2)+s_1+s_2)=6(q_1+q_2)+s_1+s_2=(6q_1+s_1)+(6q_2+s_2)=\phi_1(h_1)+\phi_1(h_2).\]
The innumber cases for $p=3$ and $p=4$ are similar, except when  $h_1=h_2=h_3=h_4=2\bmod 2c$.  In this case, $h_1+h_2+h_3+h_4=8\bmod 2c$, but $\phi_4(8)\neq 8$, so the equation fails.  Since outnumbers must be either $2c\text{ or }1\bmod 2c$, an examination of the tables shows that the equation holds in these cases as well.  For example, suppose that $p=2$ and $h_1=2cq_1+2c$ and  $h_2=2cq_2+s_2$ are outnumbers, in particular $s_2=1\text{ or }2c$.  Then we have
\[\phi_2(h_1+h_2)=\phi_2(2c(q_1+q_2+1)+s_2)=6(q_1+q_2+1)+s'_2=(6q_1+6)+(6q_2+s'_2)=\phi_1(h_1)+\phi_1(h_2)\]

Part (d) is immediate for $p=1$, so let $p=2$ and suppose  $O_n=\{2cq_1+s_1,2cq_2+s_2\}$. Then $\left\lfloor 2cq_i+s_i\right\rfloor=2cq+a_i$, where $a_i=1\text{ or }2$, and $\left\lceil 2cq_i+s_i\right\rceil=2cq+b_i$, where $b_i=1\text{ or }2c$.  Then
\begin{align*}
\phi_2(2cq_1+a_1+2cq_2+a_2)&=6q_1+6q_2+a'_1+a'_2\\
&\leq \phi_2(6q_1+6q_2+s_1+s_2)\\
&\leq 6q_1+6q_2+b'_1+b'_2\\
&= \phi_2(6q_1+b_1+6q_2+b_2)
\end{align*}
The equalities are from part (c).  The inequalities are ensured by the values in Table \ref{table1}.  For example since $a_1+a_2\leq 4$, and so $a'_1+a'_2$ is less than or equal to all other values in Table \ref{table1} to the right of it.  

The case for  $p=3$ is similar, except that it is not the case $a'_1+a'_2+a'_3\leq s_1+s_2+s_3$ when $a'_1=a'_2=a'_3=2$ and $s_1+s_2+s_3$ is odd and less than $2c$.  In this case, $s_1+s_2+s_3=5$.  However, this case never occurs, due to the last clause of Definition \ref{definition of floor}.
\end{proof}

To explain the role of Lemma \ref{map}(c) in the proof of Lemma \ref{three}, let's look at an example with $p=2$.
Suppose   $O_n = (2ac+1, 4c+2)$, where $n=2ac+1+4c+2=2c(a+2)+3$. Then
\[ \mathcal{G}_{\{1,2c\}}(O_n) = \mathcal{G}_{\{1,2c\}}(2ac+1) 
\oplus \mathcal{G}_{\{1,2c\}}(4c+2)
= \mathcal{G}_{\{1,6\}}(6a+1) \oplus \mathcal{G}_{\{1,6\}}(12+2)  
= \mathcal{G}_{\{1,6\}}(O'_{n'}). \]  
The first and last equalities are by definition, and the middle one will be by assuming the induction hypothesis on $n$ from Theorem \ref{thm5}.  Lemma \ref{map}(c) then verifies that the resulting option $O'_{n'}$ corresponds to $O_n$ via $\phi_2$.  In other words, that the third equality of the following holds:
\[n'
=(6a+1)+(12+2)
=\phi_1(2ac+1) + \phi_1(4c+2) 
=\phi_2(2ac+1+4c+2)
=\phi_2(n)\]
\begin{proof}[proof of Lemma \ref{three}]  Let $n=k+p-1$. As we saw in our examples above, this holds trivially if $n<p$. Suppose $n\geq p$, and toward an induction that for all $n'<n$, both Lemma \ref{three} and Theorem \ref{thm5} hold.   
\paragraph{\textbf{Case 1: }$2\leq p\leq 3$.}
In order to show $\mathcal{N}(n, p, \{1,2c\}) \subseteq \mathcal{N}(\phi_p(n),  p, \{1,6\})$, let $O_n=(h_1,\ldots,h_p)$ be a partition of $n$ with $p$ parts.  If $O_n$ is 
either an entering or exiting partition, 
then
\begin{align*}
\mathcal{G}_{\{1,2c\}}(O_n)&=\mathcal{G}_{\{1,2c\}}(h_1)\oplus\ldots\oplus\mathcal{G}_{\{1,2c\}}(h_p)&\text{Definition \ref{nim value of an option}}\\
&=\mathcal{G}_{\{1,6\}}(\phi_1(h_1))\oplus\ldots\oplus\mathcal{G}_{\{1,6\}}(\phi_1(h_p))&\text{induction on $n$}\\
&=\mathcal{G}_{\{1,6\}}(\phi_1(h_1),\ldots,\phi_1(h_p))&\text{Definition \ref{nim value of an option}}\\
&\in  \mathcal{N}(\phi_1(h_1)+\ldots+\phi_1(h_p),  p, \{1,6\})&\text{Definition \ref{def-nim set}}\\
&= \mathcal{N}(\phi_p(h_1+\ldots+h_p),  p, \{1,6\})&\text{Lemma \ref{map}(c)}\\
&= \mathcal{N}(\phi_p(n),  p, \{1,6\})&\text{}\\
\end{align*}
If $O_n$ is an intermediate parition, then consider its floor $\left\lfloor O_n\right\rfloor$ and ceiling $\left\lceil O_n\right\rceil$, partitions on $a,b$ respectively, with $a<n<b$.  By the above calculation,  
$\mathcal{G}_{\{1,2c\}}(\left\lfloor O_n\right\rfloor)\in \mathcal{N}(\phi_p(a),  p, \{1,6\})$,
$\mathcal{G}_{\{1,2c\}}(\left\lceil O_n\right\rceil)\in \mathcal{N}(\phi_p(b),  p, \{1,6\})$, and $\mathcal{G}_{\{1,2c\}}(O_n)\in  \mathcal{N}(\phi_1(h_1)+\ldots+\phi_1(h_p),  p, \{1,6\})$.  Furthermore, 
\[ \phi_p(a)=
\phi_1(\left\lfloor h_1\right\rfloor)+\ldots+\phi_1(\left\lfloor h_p\right\rfloor) 
\leq \phi_1(h_1)+\ldots+\phi_1(h_p) 
\leq \phi_1(\left\lceil h_1\right\rceil)+\ldots+\phi_1(\left\lceil h_p\right\rceil)
=\phi_p(b) \] 
where the inequalities are Lemma \ref{map}(d) and the equalities are Lemma \ref{map}(c).  Let $Q$ be a partition of $\phi_1(h_1)+\ldots+\phi_1(h_p)$ with $p$ parts satisfying $\mathcal{G}_{\{1,6\}}(Q)=\mathcal{G}_{\{1,2c\}}(O_n)$.
By Lemma \ref{map}(a), $\phi_p(n)$ is the same parity as $\phi_1(h_1)+\ldots+\phi_1(h_p)$, and Lemma \ref{map}(d) implies $\phi_p(a)\leq\phi_p(n)\leq\phi_p(b)$,  so by Corollary \ref{height calculation}, there is a partition $R$ of $\phi_p(n)$ with $p$ parts satisfying $\mathcal{G}_{\{1,6\}}(R)=\mathcal{G}_{\{1,6\}}(Q)$.  Hence $\mathcal{G}_{\{1,2c\}}(O_n)\in \mathcal{N}(\phi_p(n),  p, \{1,6\})$.

We still need to show $\mathcal{N}(\phi_p(n),  p, \{1,6\}) \subseteq \mathcal{N}(n, p, \{1,2c\})$.  First notice that the map $\phi_p$ is surjective on the set of all partitions.  Thus we will be done if we can show $\phi_p(n)=\phi_p(n')\implies \mathcal{N}(n, p, \{1,2c\})=\mathcal{N}(n', p, \{1,2c\})$.  Examining Tables \ref{table1} and \ref{table2}, we see that we only need to show the following.

\begin{claim}
If $p=2$ and $n\geq 5\bmod 2c$, then $\mathcal{N}(n, p, \{1,2c\})=\mathcal{N}(n-1, p, \{1,2c\})\oplus 1$.  Also, if $p=3$ and $n\geq 6\bmod 2c$, then $\mathcal{N}(n, p, \{1,2c\})=\mathcal{N}(n-1, p, \{1,2c\})\oplus 1$.
\end{claim}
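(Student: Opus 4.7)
The plan is to prove $\mathcal{N}(n, p, \{1,2c\}) = \mathcal{N}(n-1, p, \{1,2c\}) \oplus 1$ by establishing each inclusion via Lemma \ref{rem1}, using residue constraints modulo $2c$.  The key observation is that every innumber has residue $1$ or $2$ modulo $2c$ (the families $m \equiv 1 \pmod{2c}$ and $m \equiv 4c+2 \pmod{12c}$ yield residues $1$ and $2$ respectively), and every outnumber has residue $0$ or $1$ modulo $2c$.  Consequently, a $p$-part entering partition has sum residue at most $2p$, and a $p$-part exiting partition has sum residue at most $p$.

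For $p = 2$ with $n \bmod 2c \geq 5$, no $2$-part partition of $n$ can be entering (sum residue $\leq 4$), and no $2$-part partition of $n-1$ can be exiting (sum residue $\leq 2$, but $n-1 \bmod 2c \geq 4$).  Applying Lemma \ref{rem1} to each $O_n$ yields $\mathcal{N}(n, 2, \{1,2c\}) \subseteq \mathcal{N}(n-1, 2, \{1,2c\}) \oplus 1$, and applying Lemma \ref{rem1} in the opposite direction to each $O_{n-1}$ yields the reverse inclusion.

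For $p = 3$ with $n \bmod 2c \geq 6$, the exiting direction proceeds identically (sum residue $\leq 3$ but $n-1 \bmod 2c \geq 5$), and non-entering $O_n$ are handled directly by Lemma \ref{rem1}.  The main obstacle is that $3$-part entering partitions $(h_1, h_2, h_3)$ of $n$ can exist: the sum residue lies in $[3, 6]$, so it must equal $6$, which forces each $h_i$ to have residue $2$, i.e., $h_i = 4c+2 + 12c k_i$ for some $k_i \geq 0$ (and $n \equiv 6 \pmod{12c}$).  I propose to replace such a partition with $(h_1 - 1, h_2 - 1, h_3 + 2)$, which preserves the sum.  The new third part has residue $4$ modulo $2c$ and so is not an innumber, making the partition non-entering.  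By Proposition \ref{prop1}, the nim-value equality reduces to the first-period identity
\[ \mathcal{G}(4c+2) \oplus \mathcal{G}(4c+2) \oplus \mathcal{G}(4c+2) = 4 = \mathcal{G}(4c+1) \oplus \mathcal{G}(4c+1) \oplus \mathcal{G}(4c+4), \]
which is precisely observation \eqref{ob3.4} (taking its $n$ to be $4c+1$ and $a = 2$).  Lemma \ref{rem1} then applies to the replacement, completing the argument.
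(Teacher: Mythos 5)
Your proof is correct and follows essentially the same route as the paper: the residue count mod $2c$ shows that no partition of $n$ is entering and no partition of $n-1$ is exiting (so Lemma \ref{rem1} gives both inclusions), except for the single $p=3$ case where all three parts are $\equiv 4c+2 \pmod{12c}$, which the paper also isolates and resolves with the same row-three identity $1\oplus 1\oplus 4 = 4\oplus 4\oplus 4$ (the paper passes directly to $(h_1-1,h_2-1,h_3+1)$ where you first form $(h_1-1,h_2-1,h_3+2)$ and then decrement, but this is the identical idea). If anything, your write-up is more explicit than the paper's about the reverse inclusion.
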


Note that in almost each such case if $O_n$ is a partition of $n$, then $O_n$ has a part of at least size $3\bmod 2c$.  Thus, we can apply Corollary \ref{height calculation} to obtain a partition $O_{n-1}$ of $n-1$ where $\mathcal{G}_{1,2c}(O_n)=\mathcal{G}_{1,2c}(O_{n-1})\oplus 1$.  The exceptional case is if the remainders of $O_n$ are precisely $\{2,2,2\}$.  If one of these is not an innumber, then we can again apply Lemma \ref{rem1}.  Otherwise, suppose $O_n=(12cq_1+4c+2,12cq_2+4c+2,12cq_3+4c+2)$.  Then we can use $O_{n-1}=(12cq_1+4c+1,12cq_2+4c+1,12cq_3+4c+3)$.
\paragraph{\textbf{Case 2: } $p=4$.}
Corollary \ref{stick} says that once either $m$ or $m\oplus 1$ enters the sequence $\mathcal{N}(n,4,\{1, 2c\})$ (or $\mathcal{N}(n,4,\{1, 6\})$), then the alternating subsequence $(m,m\oplus1)$ remains in the sequence.  Note that the alternating subsequence has the potential to enter twice: first as $m$ at an odd $n$ or $m\oplus1$ at an even $n$, and second as $m$ at an even $n$ or $m\oplus1$ at an odd $n$.
Thus, since $\phi_4$ preserves parity, we just need to show that each nim value enters  $\mathcal{N}(n,4,\{1, 2c\})$ at $n=k$ if and only if it enters  $\mathcal{N}(n,4,\{1, 6\})$ at $n=\phi_4(k)$.
 
Next Lemmas \ref{entering lemma} and \ref{rem1} imply that we only need to check the values of entering partitions with a part of size $1$.  Lemma \ref{map}(b) implies that $\phi_1$ induces a bijection from entering partitions with a part of size $1$ to entering partitions with a part of size $1$.

So suppose that $O_n=\{h_1,h_2,h_3,h_4\}$ is a partition of $n$ with a part of size $1$, and that it is an entering partition with respect to $\mathcal{C}=\{1,2c\}$.

\begin{align*}
\mathcal{G}_{\{1,2c\}}(O_n)&=\mathcal{G}_{\{1,2c\}}(h_1)\oplus\ldots\oplus\mathcal{G}_{\{1,2c\}}(h_4)&\text{Definition \ref{nim value of an option}}\\
&=\mathcal{G}_{\{1,6\}}(\phi_1(h_1))\oplus\ldots\oplus\mathcal{G}_{\{1,6\}}(\phi_1(h_4))&\text{induction on $n$}\\
&=\mathcal{G}_{\{1,6\}}(\phi_1(h_1),\ldots,\phi_1(h_4))&\text{Definition \ref{nim value of an option}}
\end{align*}
But then $\{\phi_1(h_1),\ldots,\phi_1(h_4)\}$ is a partition with a part of size $1$, and is entering with respect to $\mathcal{C}=\{1,6\}$.  Then Lemma \ref{map}(c) says that it is a partition of $\phi_4(n)$.
\paragraph{\textbf{Case 3: }  $p  \geq 5$.}
We proceed by induction on $p$,
using $p=4$ as the base case. Note that Lemma \ref{map}(c) fails for $p\geq 5$, so the above argument does not apply.  Also the following argument would not work above, since Corollary \ref{two} only holds if $p\geq 4$.  Writing $n=k+p-1=2cq+r+p-1$, we have
\begin{align*}
\mathcal{N}(2cq+r+p-1, p, \{1,2c\})
&=  \mathcal{N}(2cq+r+(p-1)-1, p-1, \{1,2c\})&\text{ Corollary \ref{two}}\\
&= \mathcal{N}( \phi_{p-1}(2cq+r+(p-1)-1), p-1, \{1,6\})&\text{ induction on $p$}\\
&= \mathcal{N}( 6q+r'+(p-1)-1, p-1, \{1,6\})&\text{ definition of $\phi_{p-1}$}\\
&= \mathcal{N}( 6q+r'+p-1, p, \{1,6\})&\text{ Corollary \ref{two}}\\
&= \mathcal{N}( \phi_{p-1}(2cq+r+p-1), p, \{1,6\})&\text{ definition of $\phi_{p-1}$}\\
\end{align*}
\end{proof}
Now we can finally prove our main result:
\begin{proof}[Proof of Theorem \ref{thm5}]
As in earlier proofs, we note that Proposition \ref{prop1} allows us to restrict our attention to $1\leq n\leq 12c$. 
\begin{claim}
\begin{enumerate} [(a)]
\item  $ \mathcal{N}(n,2c+1,\{1,2c\}) = \mathcal{N}(\phi_1(n),7,\{1,6\}) $
\item $ \mathcal{N}(n,2,\{1,2c\}) = 
\begin{cases}
  \mathcal{N}(\phi_1(n),2,\{1,6\}) \cup \, \{1\} 
&\text{if } n = 8c+a,\\
&\text{ for some } a \in \{5,7,\dots,2c-3\}\\
 \mathcal{N}(\phi_1(n),2,\{1,6\})  
&\text{otherwise }\\
\end{cases}  $
\end{enumerate}
\end{claim}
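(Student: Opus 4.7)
\end{claim}

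\begin{proof}[Proof proposal]
My plan is to deduce both parts of the claim from Corollary \ref{two} and Lemma \ref{three}, leaving only a short finite verification for part (b). The inductive setup already in place for Lemma \ref{three} and Theorem \ref{thm5} allows me to use these results freely in the range $1\leq n\leq 12c$.

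For part (a), I would apply Corollary \ref{two} iteratively to reduce the number of parts from $2c+1$ down to $7$ (this is valid since $p$ remains at least $7\geq 4$ throughout), obtaining
\[ \mathcal{N}(n, 2c+1, \{1,2c\}) = \mathcal{N}(n-(2c-6),\, 7,\, \{1,2c\}). \]
Lemma \ref{three} with $p=7$ then converts the right-hand side into a $\{1,6\}$-nim-set. Writing $n=2cq+r$ with $1\leq r\leq 2c$ and $q\geq 1$, the definition of $\phi_7$ yields $\phi_7(n-2c+6)=6(q-1)+r'+6=6q+r'=\phi_1(n)$, which finishes this part. For $n\leq 2c$ both sides are empty and the identity is trivial.

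For part (b), the first step is to apply Lemma \ref{three} with $p=2$ to get $\mathcal{N}(n,2,\{1,2c\}) = \mathcal{N}(\phi_2(n),2,\{1,6\})$, and then to compare $\phi_2(n)$ with $\phi_1(n)$. Writing $n=2cq+s$ with $1\leq s\leq 2c$, Table \ref{table1} combined with the definition of $\phi_1$ shows that $\phi_1(n)=\phi_2(n)$ in every case except when $s$ is odd and $5\leq s\leq 2c-3$; in those positions $\phi_1(n)=6q+3$ and $\phi_2(n)=6q+5$. Outside this exceptional window the desired equality is immediate.

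The main obstacle will be the finite verification inside the exceptional window. By periodicity I may restrict to $1\leq n\leq 12c$, so $q\in\{0,1,2,3,4,5\}$. For each $q\neq 4$ I must check $\mathcal{N}(6q+3,2,\{1,6\}) = \mathcal{N}(6q+5,2,\{1,6\})$, and for $q=4$, which corresponds exactly to the stated window $n=8c+a$ with $a\in\{5,7,\ldots,2c-3\}$, I must check $\mathcal{N}(29,2,\{1,6\}) = \mathcal{N}(27,2,\{1,6\}) \cup \{1\}$. All twelve of these two-pile nim-sets can be written down directly from the known nim-sequence for $\mathcal{C}=\{1,6\}$; the extra value $1$ at position $29$ arises from the single partition $(14,15)$, whose contribution $\mathcal{G}_{\{1,6\}}(14)\oplus \mathcal{G}_{\{1,6\}}(15) = 4\oplus 5 = 1$ has no counterpart among the pairs summing to $27$.
\end{proof}
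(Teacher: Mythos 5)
Your proposal is correct and follows essentially the same route as the paper: part (a) by iterating Corollary \ref{two} to drop from $2c+1$ to $7$ parts and then applying Lemma \ref{three} with $p=7$, and part (b) by applying Lemma \ref{three} with $p=2$, observing that $\phi_2(n)$ and $\phi_1(n)$ disagree exactly on the window $s\in\{5,7,\dots,2c-3\}$ (where they equal $6q+5$ versus $6q+3$), and finishing with the finite comparison of $\mathcal{N}(6q+3,2,\{1,6\})$ and $\mathcal{N}(6q+5,2,\{1,6\})$ for $0\leq q\leq 5$. Your identification of $(14,15)$ as the unique source of the extra value $1$ at $n=29$ is a correct and slightly more explicit account of the $q=4$ discrepancy than the paper's table.
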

Assume the claim.  If it is not the case that $n=8c+a$ for some $ a \in \{5,7,\dots,2c-3\}$, then it is clear that $\mathcal{G}_{\{1,2c\}}(n)=\mathcal{G}_{\{1,6\}}(\phi_1(n))$.

So suppose $a \in \{5,7,\dots,2c-3\}$ and let $n=8c+a$.  Then we can see that 
\[ 1 \in\mathcal{N}( \phi_1(8c+a),7,\{1,6\}) 
= \mathcal{N}(27,7,\{1,6\}) \] 
by considering, for example, the option $O_{27} = (13,3,3,2,2,2,2)$. 
That is $\mathcal{G}_{\{1,6\}}((13,3,3,2,2,2,2)) = 1.$  Thus we  conclude that 
\begin{align*}
 \mathcal{G}_{\{1,2c\}}(n)&=\mex\left\{\mathcal{N}(n,2,\{1,2c\}) \cup \mathcal{N}(n,2c+1,\{1,2c\})\right\}\\
 &=  \mex\left\{\mathcal{N}( \phi_1(n),2,\{1,6\}) \cup \{1\} \cup \mathcal{N}( \phi_1(n),7,\{1,6\})\right\} \\
 &= \mex\left\{\mathcal{N}( \phi_1(n),2,\{1,6\}) \cup \mathcal{N}( \phi_1(n),7,\{1,6\})\right\}\\
 &=\mathcal{G}_{\{1,6\}}(\phi_1(n))
\end{align*}

\begin{proof}[Proof of claim] 
To prove (a), first note that if $n<2c+1$, then $\phi_1(n)<7$, so we have $\mathcal{N}(n,2c+1,\{1,2c\}) = \{\;\} = \mathcal{N}(\phi_1(n),7,\{1,6\}) $.  So suppose $n=2cq+2c+r$ with $1\leq r\leq 2c$ and $0\leq q\leq 4$.  Then
\begin{align*}
\mathcal{N}(n, 2c+1, \{1,2c\})&=\mathcal{N}(2cq+r+2c, 2c+1, \{1,2c\})\\ 
&= \mathcal{N}(2cq+ r+6,7,\{1,2c\}) &\text{Corollary }\ref{two}\\
&=\mathcal{N}(6q+r'+6,7,\{1,6\}) &\text{Lemma }\ref{three}\\
&=\mathcal{N}(\phi_1(n),7,\{1,6\}) &
\end{align*}
For (b), again $\mathcal{N}(1,2,\{1,2c\}) = \{\;\} = \mathcal{N}(\phi_1(1),2,\{1,6\}) $, so let $n=2cq+r+1$ with $1\leq r\leq 2c$ and $0\leq q\leq 5$.  Then we have
\begin{align*}
\mathcal{N}(n, 2, \{1,2c\})&=\mathcal{N}(2cq+r+1, 2, \{1,2c\})\\ 
&=\mathcal{N}(6q+r'+1,2,\{1,6\}) &\text{Lemma }\ref{three}
\end{align*}
If we examine Table \ref{table1}, we see that $6q+r'+1=\phi_1(n)$, except when $r+1\in\{5,7,\ldots,2c-3\}$.  In these cases, $\phi_1(n)=6q+3$, but $6q+r'+1=6q+5$.  As it happens, we can use the computer to compute $\mathcal{N}(6q+3,2,\{1,6\})$ and $\mathcal{N}(6q+5,2,\{1,6\})$ for all $0\leq q\leq 5$.  They are:
\begin{align*}
&q&&0&&1&&2&&3&&4&&5\\
\hline
&\mathcal{N}(6q+3,2,\{1,6\})&&\{1\}&&\{1,3\}&&\{0,1,3,4\}&&\{1,2,4,6\}&&\{0,2,5,6\}&&\{0,1,3,5,7\}\\
&\mathcal{N}(6q+5,2,\{1,6\})&&\{1\}&&\{1,3\}&&\{0,1,3,4\}&&\{1,2,4,6\}&&\{0,1,2,5,6\}&&\{0,1,3,5,7\}
\end{align*}
We see that they are equal except that when $q=4$, $\mathcal{N}(6q+5,2,\{1,6\})=\mathcal{N}(6q+3,2,\{1,6\})\cup\{1\}$, proving the claim.
\end{proof}
\end{proof}

\section{An extension}
After another fact about our nim-sets, we compute the nim-sequence for any version of \textsc{cut} whose cut-set consists of $1$ and even numbers greater than $2$.

\begin{lem} \label{seven}
For $p \geq 4, c \geq 2,$
\[ \mathcal{N}( n, p+2,\{1,2c\}) \subset \mathcal{N}(n,p,\{1,2c\}).  \]
\end{lem}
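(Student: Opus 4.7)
The plan is to derive this lemma directly from Corollary \ref{two} and Corollary \ref{stick}, both of which are already available for $p\geq 4$. The idea is to trade the two extra parts for a shift of $2$ in $n$ via Corollary \ref{two}, and then undo that shift at the level of nim-values by applying Corollary \ref{stick} twice.

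First, I would apply Corollary \ref{two} twice. Since $p\geq 4$, both $p$ and $p+1$ are at least $4$, so each application is valid and we obtain
\[ \mathcal{N}(n, p+2, \{1,2c\}) \;=\; \mathcal{N}(n-1, p+1, \{1,2c\}) \;=\; \mathcal{N}(n-2, p, \{1,2c\}). \]
It therefore suffices to show $\mathcal{N}(n-2, p, \{1,2c\}) \subseteq \mathcal{N}(n, p, \{1,2c\})$. Given any $v\in\mathcal{N}(n-2,p,\{1,2c\})$, one application of Corollary \ref{stick} gives $v\oplus 1\in\mathcal{N}(n-1,p,\{1,2c\})$, and a second application (with $v\oplus 1$ playing the role of $v$) gives $(v\oplus 1)\oplus 1 = v\in\mathcal{N}(n,p,\{1,2c\})$, which is exactly what is needed.

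The only remaining items are the degenerate cases: if $n < p+2$ then $\mathcal{N}(n, p+2, \{1,2c\})$ is empty and there is nothing to prove, while if $n=p+2$ both sides reduce to $\{0\}$, which is precisely the base case appearing in the proof of Corollary \ref{two}. The main obstacle is essentially none; the substantive work was done in establishing Corollaries \ref{two} and \ref{stick}, and the lemma falls out as a two-line consequence. One should just be careful to note that Corollary \ref{stick} is only a one-way implication, but since we invoke it twice the extra $\oplus 1$ is cancelled and no direction issue arises.
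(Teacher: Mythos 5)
Your proof is correct and is exactly the paper's argument: two applications of Corollary \ref{two} give $\mathcal{N}(n,p+2,\{1,2c\})=\mathcal{N}(n-2,p,\{1,2c\})$, and two applications of Corollary \ref{stick} give the containment in $\mathcal{N}(n,p,\{1,2c\})$. The additional remarks about degenerate cases and the cancellation of the $\oplus 1$ are fine but not needed beyond what the paper records.
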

\begin{proof}
\begin{align*}
\mathcal{N}(n,p+2,\{1,2c\}) &=  \mathcal{N}(n-2,p,\{1,2c\})&\text{Corollary \ref{two} twice}\\
&\subset \mathcal{N}(n,p,\{1,2c\})&\text{Corollary \ref{stick} twice}
\end{align*}
\end{proof}
\begin{thm} \label{eight}
Let $\mathcal{C}_1 = \{1, 2c_1, 2c_2, 2c_3, ...\},   \;\ c_1,c_2, ... \geq 2$
and $\mathcal{C}_2 = \{1,2c\}$ where $c = \min\{c_1,c_2,c_3,\dots\}.$
 Then for $n\geq 1$,
 \[  \mathcal{G}_{\mathcal{C}_1}(n) =  \mathcal{G}_{\mathcal{C}_2}(n). \]
 \end{thm}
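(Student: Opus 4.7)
The plan is to prove Theorem \ref{eight} by strong induction on $n$. The base case $n = 1$ is immediate: neither game admits a legal move on a singleton pile (every move requires $d+1 \geq 2$ parts), so $\mathcal{G}_{\mathcal{C}_1}(1) = 0 = \mathcal{G}_{\mathcal{C}_2}(1)$.

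For the inductive step, I would fix $n \geq 2$ and assume $\mathcal{G}_{\mathcal{C}_1}(m) = \mathcal{G}_{\mathcal{C}_2}(m)$ for every $m < n$. The nim-value of any option $O_n = (h_1, \ldots, h_p)$ is the nim-sum of the $\mathcal{G}$-values of its parts, and each $h_j < n$, so the inductive hypothesis yields $\mathcal{N}(n, p, \mathcal{C}_1) = \mathcal{N}(n, p, \mathcal{C}_2)$ for every $p \geq 2$. Consequently,
\[ \mathcal{G}_{\mathcal{C}_1}(n) = \mex\left\{ \mathcal{N}(n, 2, \mathcal{C}_2) \cup \bigcup_{i \geq 1} \mathcal{N}(n, 2c_i + 1, \mathcal{C}_2) \right\}. \]

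The key remaining step is to show that each extra nim-set contributed by a $c_i > c$ is already present in $\mathcal{N}(n, 2c+1, \mathcal{C}_2)$. Since $c_i \geq c \geq 2$, we have $2c_i + 1 \geq 2c + 1 \geq 5$, and the two quantities have the same parity. I would then iterate Lemma \ref{seven}, stepping down from $2c_i + 1$ to $2c + 1$ in blocks of $2$; each step is valid because the smallest intermediate pile count is $2c + 1 \geq 5 \geq 4$, satisfying the hypothesis of Lemma \ref{seven}. This gives $\mathcal{N}(n, 2c_i + 1, \mathcal{C}_2) \subseteq \mathcal{N}(n, 2c + 1, \mathcal{C}_2)$ for every $i$, so the union inside the mex collapses to $\mathcal{N}(n, 2, \mathcal{C}_2) \cup \mathcal{N}(n, 2c + 1, \mathcal{C}_2)$, whose mex is precisely $\mathcal{G}_{\mathcal{C}_2}(n)$.

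Given Lemma \ref{seven}, there is essentially no serious obstacle; the only subtlety is the bookkeeping transition from $\mathcal{C}_1$-nim-sets to $\mathcal{C}_2$-nim-sets afforded by the inductive hypothesis, which is legitimate because $\mathcal{N}(n, p, \mathcal{C})$ depends only on the values $\mathcal{G}_{\mathcal{C}}(h)$ for $h < n$. Note that allowing $\mathcal{C}_1$ to be infinite causes no trouble, since for $2c_i + 1 > n$ the set $\mathcal{N}(n, 2c_i+1, \mathcal{C}_2)$ is empty, so only finitely many terms in the union are non-trivial.
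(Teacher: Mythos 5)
Your proposal is correct and matches the paper's proof essentially step for step: strong induction on $n$, using the inductive hypothesis to replace $\mathcal{C}_1$-nim-sets with $\mathcal{C}_2$-nim-sets, and then iterating Lemma \ref{seven} to absorb each $\mathcal{N}(n,2c_i+1,\mathcal{C}_2)$ into $\mathcal{N}(n,2c+1,\mathcal{C}_2)$. The only difference is that you spell out the descent in steps of $2$ and the parity/bound check ($2c+1\geq 5\geq 4$) that the paper leaves implicit, which is a reasonable bit of added care.
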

 
 \begin{proof}
 We proceed by induction on $n.$ For the base case, it is clear that 
 $\mathcal{G}_{\mathcal{C}_1}(1) =  \mathcal{G}_{\mathcal{C}_2}(1) = 0.$
 For the induction step, we assume the statement is true for all $n'<n$.  This means that for any $p>1$, $\mathcal{N}(n,p,\mathcal{C}_1) =\mathcal{N}(n,p,\mathcal{C}_2)$.  Hence
 \begin{align*}
 \mathcal{G}_{\mathcal{C}_1}(n) &= \mex \left\{ 
 \mathcal{N}(n,2,\mathcal{C}_1) \cup \bigcup_{2c_i\in\mathcal{C}_1}
 \mathcal{N}(n,2c_i+1,\mathcal{C}_1)  \right\} \\
  &= \mex \left\{ 
 \mathcal{N}(n,2,\mathcal{C}_2) \cup \bigcup_{2c_i\in\mathcal{C}_1} \mathcal{N}(n,2c_i+1,\mathcal{C}_2)  \right\}\\
  &= \mex \{ \mathcal{N}(n,2,\mathcal{C}_2) \cup \mathcal{N}(n,2c+1,\mathcal{C}_2)  \}&\text{ Lemma \ref{seven}}\\
  &=  \mathcal{G}_{\mathcal{C}_2}(n)
 \end{align*}
\end{proof}

\section{What's left? }
In this section, we categorize the families of cut sets for which the nim-sequence of \textsc{cut} remains unknown.  There are $4$ such families.
Let $X$ to be a non-empty set of even numbers, 
each of which is at least 4. Let $Y$ to be a non-empty set of odd numbers, each of which is at least 5.  Let $x=2c$ and $y$ be the smallest elements of $X$ and $Y$, respectively.

\textbf{Family A: }$\mathcal{C} =\{1, 3\} \, \cup \, X,$ or $\mathcal{C} =\{1, 3\} \, \cup \, X \, \cup Y$

We already know from \cite[Propositions 8]{par} that  the nim-sequence for $\mathcal{C} =\{1, 3,2c\}$ is $(0,1)^c (+2)$.

\begin{conj}
The nim-sequence for all games of \textsc{cut} in this family are all precisely $(0,1)^c (+2)$.
\end{conj}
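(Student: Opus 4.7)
The plan is to mirror the argument of Theorem \ref{eight}. Set $\mathcal{C}_0 = \{1, 3, 2c\}$, the smallest cut-set in the family, for which \cite[Proposition 8]{par} already establishes the nim-sequence $(0,1)^c(+2)$. Proceeding by strong induction on $n$, we aim to show $\mathcal{G}_{\mathcal{C}}(n) = \mathcal{G}_{\mathcal{C}_0}(n)$. Under the inductive hypothesis, $\mathcal{N}(n, p, \mathcal{C}) = \mathcal{N}(n, p, \mathcal{C}_0)$ for every $p \geq 2$, so it suffices to prove
\[ \mathcal{N}(n, d+1, \mathcal{C}_0) \;\subseteq\; \mathcal{N}(n, 2, \mathcal{C}_0) \cup \mathcal{N}(n, 4, \mathcal{C}_0) \cup \mathcal{N}(n, 2c+1, \mathcal{C}_0) \]
for each $d \in \mathcal{C} \setminus \mathcal{C}_0$. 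Any such $d$ is either some $2c_i \in X$ with $c_i > c$, making $d+1 \geq 2c+3$ odd, or some $y \in Y$, making $d+1 \geq 6$ even.

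The key technical step is the analog of Lemma \ref{seven} for $\mathcal{C}_0$: for $p \geq 4$,
\[ \mathcal{N}(n, p+2, \mathcal{C}_0) \;\subseteq\; \mathcal{N}(n, p, \mathcal{C}_0). \]
Iterating this inclusion in steps of $2$ descends from $d+1$ parts down to $2c+1$ in the odd case and to $4$ in the even case, after which the $\mex$ argument of Theorem \ref{eight} closes the induction. To prove the inclusion, we need analogs of Corollaries \ref{two} and \ref{stick} for $\mathcal{C}_0$, which in turn rest on analogs of Lemmas \ref{rem1}, \ref{one}, and \ref{entering lemma}. The starting point is to write down replacements for the observations (ob1)--(ob3.4) using the explicit formula $\mathcal{G}(2cq + r) = 2q$ when $r$ is odd and $\mathcal{G}(2cq + r) = 2q + 1$ when $r$ is even (for $1 \leq r \leq 2c$). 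A direct verification gives: $\mathcal{G}(n+1) = \mathcal{G}(n) \oplus 1$ whenever $n \not\equiv 0 \bmod 2c$; $\mathcal{G}(2ac) = \mathcal{G}(2ac - 2)$; $\mathcal{G}(2ac+1) = \mathcal{G}(2ac+3)$; and $\mathcal{G}(2ac+2) = \mathcal{G}(2ac+4)$. Pleasantly, the last three hold unconditionally here, without the parity-on-$a$ restrictions of (ob3.3)--(ob3.4). Declaring $n$ an innumber if $n \equiv 1 \bmod 2c$ and an outnumber if $n \equiv 0 \bmod 2c$, Lemma \ref{rem1} and Corollary \ref{height calculation} port over almost verbatim.

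The main obstacle is the analog of Lemma \ref{one}. In the original setting of $\mathcal{C} = \{1, 2c\}$, the proof exploited pigeonhole among the \emph{seven} outnumbers lying in a single period of length $12c$. For $(0,1)^c(+2)$ there is only one outnumber, namely $2c$, per period of length $2c$, so the pigeonhole must be recast across multiple periods. Writing an exiting partition as $h_i = 2ck_i$, the contribution of $h_i$ to the nim-value is $2k_i - 1$; useful flexibility is provided by the observation that $\mathcal{G}(2ca - 2j) = 2a - 1$ for every $0 \leq j \leq c - 1$, so ``mass'' can be shifted between parts within a single period without altering a part's nim-contribution. Combining this with the $p \geq 4$ hypothesis, one expects to force a compensating swap in some pair of parts whose contributions cancel; the details will involve a case analysis on the multiset of $2$-adic valuations of the $k_i$. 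The analog of Lemma \ref{entering lemma} follows a parallel pattern, using $h_i = 2ck_i + 1$ and the identity $\mathcal{G}(2ca + 2j + 1) = 2a$ for $0 \leq j \leq c - 1$. Once these two lemmas are established, the analog of Corollary \ref{two}, then the analog of Lemma \ref{seven}, and finally the $\mex$ comparison all follow the template of Sections 4 and 5 essentially unchanged.
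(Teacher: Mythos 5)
First, a point of orientation: the statement you are proving is Conjecture~1 of this paper (Family~A in Section~8) --- the authors offer no proof, so there is nothing to compare your argument to except its own internal soundness. Your overall reduction is the natural one and its bookkeeping is correct: take $\mathcal{C}_0=\{1,3,2c\}$, induct on $n$ as in Theorem~\ref{eight}, and observe that every extra cut value $d$ yields a part-count $d+1$ that descends in steps of $2$ (staying $\geq 4$ throughout) to one of $\{4,\,2c+1\}$, so an analog of Lemma~\ref{seven} would finish the job. Your replacement observations for \eqref{ob1}--\eqref{ob3.4} are also all correct for the sequence $\mathcal{G}(2cq+r)=2q+[r\text{ even}]$. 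However, the two load-bearing lemmas are exactly the ones you defer with ``one expects'' and ``the details will involve a case analysis,'' so even on its own terms this is a plan, not a proof.

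The more serious problem is that one of the two deferred lemmas is false. The analog of Lemma~\ref{entering lemma} fails for $(0,1)^c(+2)$: take the entering partition $O_n=(2c+1,\,4c+1,\,8c+1,\,16c+1)$ of $n=30c+4$, with nim-value $2\oplus4\oplus8\oplus16=30$ and no part of size $1$. Write any $4$-part partition of $n$ as $h_i=2cq_i+r_i$ with $1\leq r_i\leq 2c$, so $\mathcal{G}(h_i)=2q_i+e_i$ with $e_i=[r_i\text{ even}]$ and $\bigoplus_i\mathcal{G}(h_i)=2(\bigoplus_i q_i)+(\bigoplus_i e_i)$. Nim-value $30$ forces $\bigoplus_i q_i=15$, while $\sum_i r_i\equiv 4\pmod{2c}$ forces $\sum_i q_i\in\{15,14,13,12\}$; since always $\sum_i q_i\geq\bigoplus_i q_i$, we get $\sum_i q_i=15$ and $\sum_i r_i=4$, i.e.\ every $r_i=1$. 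So \emph{every} $4$-part partition of $30c+4$ with nim-value $30$ is entering, and the value $30$ genuinely enters $\mathcal{N}(\cdot,4,\mathcal{C}_0)$ at a partition with no part of size $1$. The structural reason is that for entering partitions the remainder budget $\sum r_i$ is already minimal, so there is no slack to trade periods for remainders; this is an honest asymmetry with the exiting case, where the budget $\sum r_i=8c$ is maximal. (Indeed your Lemma~\ref{one} analog is true, and more cleanly than you suggest: for an exiting $(2ck_1,\dots,2ck_4)$ with $W=\bigoplus(k_i-1)$ and $S=\sum(k_i-1)+2\geq W$ of the same parity, the parts $2cq_i+r_i$ with $(q_1,q_2,q_3,q_4)=(W,\tfrac{S-W}{2},\tfrac{S-W}{2},0)$ and odd remainders summing to $4c$ do the job --- no $2$-adic case analysis needed.) Since the proof of Corollary~\ref{two}, hence of Corollary~\ref{stick} and Lemma~\ref{seven}, routes through the entering lemma, your chain breaks at this point. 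The statements of those corollaries may still be true for $\mathcal{C}_0$ (in the example above, $30$ does lie in $\mathcal{N}(30c+3+1,4)$ and one can check $62\in\mathcal{N}(62c+4,4)$ via $(2c+1,4c+1,8c+1,48c+1)$, matching the $5$-part entry point), but you would need a different argument --- most plausibly a direct characterization of $\mathcal{N}(n,p,\{1,3,2c\})$ for $p\geq 4$ using the explicit formula for $\mathcal{G}$, rather than the entering/exiting machinery of Section~4.
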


\textbf{Family B:}   $\mathcal{C} =\{1\} \, \cup \, X \, \cup Y$

The nim-sequence of this family seems to have some resemblance to
the nim-sequence for $\mathcal{C} =\{1,2c\}$ when $c\geq 2$, but we cannot make a full conjecture at this time.  The following partial extention of Theorem \ref{eight} seems to be true.

\begin{conj}
If $3x < y$, then $\mathcal{G}_{\mathcal{C}}(n)  = \mathcal{G}_{\{1,x\}}(n)$
for $n \geq 1$.
\end{conj}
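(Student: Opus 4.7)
The plan is to imitate the proof of Theorem~\ref{eight} via strong induction on $n$. The base case $n=1$ is trivial. Given the inductive hypothesis that $\mathcal{G}_{\mathcal{C}}(n') = \mathcal{G}_{\{1,x\}}(n')$ for all $n' < n$, every nim-set at position $n$ for $\mathcal{C}$ coincides with the corresponding nim-set for $\{1,x\}$, since those sets depend only on smaller positions. It therefore suffices to show that the extra cuts---even ones in $X\setminus\{x\}$ and odd ones in $Y$---leave the mex unchanged. The additional even cuts are dispatched verbatim as in Theorem~\ref{eight}: for any $x_i \in X$ with $x_i \geq x$, iterating Lemma~\ref{seven} yields $\mathcal{N}(n,x_i+1,\{1,x\}) \subseteq \mathcal{N}(n,x+1,\{1,x\})$, contributing no new values.

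For each odd cut $y_j \in Y$, the goal is to prove $\mathcal{G}_{\{1,x\}}(n) \notin \mathcal{N}(n,y_j+1,\{1,x\})$. Iterating Corollary~\ref{two} gives $\mathcal{N}(n,y_j+1,\{1,x\}) = \mathcal{N}(n-y_j+3,\,4,\,\{1,x\})$ (the set is empty when $n < y_j+1$). I would then pass to the arithmetic-periodic decomposition of Proposition~\ref{prop1}. Write $n = 12cQ + r$ with $1 \leq r \leq 12c$, and for each part $h_i = 12c k_i + r_i'$ of a $4$-partition of $n-y_j+3$, the overall nim-value is $8(k_1\oplus k_2\oplus k_3\oplus k_4) + \bigl(g(r_1')\oplus g(r_2')\oplus g(r_3')\oplus g(r_4')\bigr)$, where $g$ denotes the first-period values and the low term lies in $\{0,\ldots,7\}$. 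For this nim-value to equal $\mathcal{G}_{\{1,x\}}(n) = 8Q + g(r)$, the high XOR must equal $Q$; but $\sum k_i \leq (n-y_j+3)/12c \leq Q$, and nim-XOR is bounded by the sum, so this forces $\sum k_i = Q$ with the $k_i$'s having pairwise disjoint binary supports. Consequently $\sum r_i' = r - y_j + 3$, and the hypothesis $3x < y$ (equivalently $y_j \geq 6c+1$) squeezes this to $\sum r_i' \leq r - 6c + 2$. In particular, when $r \leq 6c+1$ the requirement $\sum r_i' \geq 4$ is already violated, immediately ruling out any such partition.

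The main obstacle will be the remaining range $r \geq 6c+2$. There each residue $r_i'$ lies in $[1, 6c-1]$, falling in rows 1--3 of the first-period table, so $g(r_i') \in \{0,1,2,3,4,5\}$; meanwhile $g(r)$ lies in rows 4--6, with $g(r) \in \{2,3,4,5,6,7\}$. A case analysis partitioning by which rows the $r_i'$ inhabit, and exploiting the $\oplus 1$-alternation \eqref{ob1} together with the adjustment identities \eqref{ob2.1}--\eqref{ob3.4}, should show that no admissible four-tuple of residues summing to $r - y_j + 3$ can XOR to $g(r)$. As a sanity check on the necessity of the hypothesis, the cut-set $\mathcal{C}=\{1,6,7\}$ gives $\mathcal{G}_{\{1,6,7\}}(13) = \mex\{0,1,2,3\} = 4$ while $\mathcal{G}_{\{1,6\}}(13) = 1$; here $y=7 \not> 18=3x$, so the bound must be genuinely used at the boundary of the residue analysis.
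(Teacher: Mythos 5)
First, a point of order: the statement you are proving is Conjecture 2 of the paper (Family B). The authors explicitly leave it open and offer no proof, so there is nothing to compare your argument against; your proposal has to stand on its own.

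Your reduction is sound as far as it goes: the strong induction making $\mathcal{N}(n,p,\mathcal{C})=\mathcal{N}(n,p,\{1,x\})$, the disposal of the extra even cuts via Lemma \ref{seven}, the collapse $\mathcal{N}(n,y_j+1,\{1,x\})=\mathcal{N}(n-y_j+3,4,\{1,x\})$ via Corollary \ref{two}, and the use of Proposition \ref{prop1} to force $\sum k_i=Q$ and hence $\sum r_i'=r-y_j+3\le r-6c+2$ are all correct, and your observation that $\mex(A\cup B)=\mex(A)$ exactly when $\mex(A)\notin B$ is the right target. The genuine gap is that the decisive step --- showing no four residues with $\sum r_i'=r-y_j+3$ can have $g(r_1')\oplus g(r_2')\oplus g(r_3')\oplus g(r_4')=g(r)$ when $r\ge 6c+2$ --- is merely asserted (``should show''), and that is where all the content of the conjecture lives; without it you have not proved anything the paper's machinery does not already give.

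That said, the gap appears closable by a short bit-counting argument rather than the heavy case analysis you anticipate, and you should carry it out. Since each $r_i'\ge 1$ and $\sum r_i'=r-y_j+3$ with $y_j\ge 6c+1$, every part satisfies $r_i'\le r-y_j\le r-6c-1$. If $6c+2\le r\le 8c$ (so $g(r)\in\{2,3\}$), then $r_i'\le 2c-1$, so every $g(r_i')\in\{0,1\}$ and the XOR cannot have its $2$-bit set. If $8c+1\le r\le 10c$ (so $g(r)\in\{4,5\}$), then $r_i'\le 4c-1$, so every $g(r_i')\in\{0,1,2,3\}$ and the XOR cannot have its $4$-bit set. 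If $10c+1\le r\le 12c$ (so $g(r)\in\{6,7\}$), then $r_i'\le 6c-1$ and the $4$-bit forces exactly one part with $g$-value in $\{4,5\}$, i.e.\ one $r_i'\ge 4c+2$; the remaining three then sum to at most $(6c+2)-(4c+2)=2c$, so they lie in row $1$ with values in $\{0,1\}$, and the total XOR lands in $\{4,5\}$, never $\{6,7\}$. Together with your observation that $r\le 6c+1$ is vacuous, this completes the induction step. (Your sanity check with $\{1,6,7\}$ is apt: there $y_j=7<6c+1$ defeats the inequality $r_i'\le r-6c-1$ on which all three cases rest.)
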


We note that proving the arithmetic-periodicity of Families A and B would imply Conjecture 1 of \cite{par}.

\textbf{Family C:} $\{1, 2\}\subseteq\mathcal{C}, 3\notin\mathcal{C}, \mathcal{C}\neq\{1, 2\}$.

It is not so clear how to categorize the patterns of this family.  However, we do observe:
\begin{conj}
The nim-sequence for all games of \textsc{cut} in Family C are all \emph{ultimately} arithmetic-periodic.
\end{conj}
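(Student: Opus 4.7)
The plan is to attempt a proof along the same architectural lines as Theorem \ref{main target}, adapted to the more complicated cut-set structure of Family C. Since $\{1,2\} \subseteq \mathcal{C}$ with $3 \notin \mathcal{C}$, a position of size $n$ admits two-pile partitions (from $d=1$), three-pile partitions (from $d=2$), and additional $(d+1)$-pile partitions for each remaining $d \in \mathcal{C}$. The word \emph{ultimately} in the conjecture suggests that a preperiodic initial segment must be allowed before the arithmetic-periodic pattern stabilizes, so some care with thresholds will be needed.

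First I would compute the nim-sequence for several small representative cases, for instance $\mathcal{C} = \{1,2,4\}$, $\{1,2,5\}$, $\{1,2,6\}$, and $\{1,2,4,6\}$, well past any apparent preperiodic segment, and try to identify candidate periods $P$, saltuses $s$, and preperiod lengths $N_0$. Since adjoining $3$ to such a cut-set collapses the sequence to $(0)(+1)$ by \cite{par}, while removing $2$ (keeping $1$ and an even number $\geq 4$) yields the rich arithmetic-periodic sequence of Theorem \ref{main target}, the behavior of Family C should sit between these. I would watch particularly for candidate saltuses that are powers of $2$, so that Proposition \ref{prop1} still applies and the division-algorithm decomposition used throughout this paper remains available.

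Once a candidate formula is in hand, I would define nim-sets $\mathcal{N}(n,p,\mathcal{C})$ for each $p$ and try to establish analogues of Lemmas \ref{one} and \ref{entering lemma} and Corollary \ref{two}: for sufficiently large $p$ and sufficiently large $n$, a shift/reduction of the form $\mathcal{N}(n+1, p+1, \mathcal{C}) = \mathcal{N}(n, p, \mathcal{C})$. The key is to show that entering and exiting partitions with respect to the candidate period can be replaced by other partitions with the same nim-value, via identities like \eqref{ob2.1}--\eqref{ob3.4} adapted to the new period. This would reduce the computation of $\mathcal{G}_{\mathcal{C}}(n)$ to the mex of a small handful of nim-sets that can be handled by case analysis.

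The main obstacle will be the three-pile moves coming from $2 \in \mathcal{C}$. The proofs of Lemmas \ref{one} and \ref{entering lemma} required $p \geq 4$, and the three-pile case had to be handled separately (Case 1 of the proof of Theorem \ref{thm5}) using the special clause in Definition \ref{definition of floor}. In Family C the three-pile partitions contribute to the mex at every step rather than being absorbed into a higher-$p$ nim-set, so they must be controlled directly. I would therefore expect to prove a lemma of the form: beyond some threshold $N_0$, every three-pile partition of $n$ can be modified (by altering two parts while preserving the sum) to an equivalent partition whose parts have prescribed residues modulo the candidate period, allowing the nim-value to be read off from the conjectured arithmetic-periodic form. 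Pinning down a threshold $N_0$ uniformly across all $\mathcal{C}$ in Family C, and verifying that the saltus really is a power of $2$, is where I expect the real difficulty to lie; should the saltus fail to be a power of $2$, then Proposition \ref{prop1} fails and a substantially different machinery would be required, and the word \emph{ultimately} in the conjecture suggests that some members of Family C may genuinely have a long and irregular preperiod that obstructs a uniform argument.
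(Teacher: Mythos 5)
What you have submitted is a research programme, not a proof, and it is worth being explicit that the statement itself is left open in the paper: it is Conjecture 3, stated with the accompanying remark that it is not clear how to categorize the patterns of Family C, and no proof is offered. Your proposal never identifies a candidate period, saltus, or preperiod length for even one member of Family C; never states, let alone proves, the analogues of Lemmas \ref{one} and \ref{entering lemma} and Corollary \ref{two} that it invokes; and never exhibits the replacement identities playing the role of \eqref{ob2.1}--\eqref{ob3.4}, which are the load-bearing computations in every argument of this type in the paper. Each of these omissions is not a detail to be filled in later --- collectively they are the entire content of such a proof. Your own closing sentence concedes that the saltus might not be a power of $2$, in which case Proposition \ref{prop1} is unavailable and the whole architecture collapses; a proof cannot be conditional on its own method applying.

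The most serious structural gap is the one you correctly flag but do not resolve: the three-pile moves contributed by $2\in\mathcal{C}$. In the proof of Theorem \ref{main target}, the entering/exiting machinery and the recursion $\mathcal{N}(n+1,p+1,\mathcal{C})=\mathcal{N}(n,p,\mathcal{C})$ require $p\geq 4$, and the cases $p=2,3$ are handled by a direct analysis that leans on the explicit nim-sequence of $\{1,2c\}$ (including the ad hoc clause in Definition \ref{definition of floor}). For Family C, $\mathcal{N}(n,3,\mathcal{C})$ feeds into the mex at every $n$, and there is no target formula against which to control it; the paper's own discussion of $\mathcal{C}=\{1,2\}$ shows that the interaction of the $d=1$ and $d=2$ moves alone already produces a sequence with no discernible pattern, which is exactly where one should expect your proposed lemmas to fail or require genuinely new ideas. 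Until you can (i) produce and computationally verify a conjectural arithmetic-periodic form for at least one concrete cut-set in Family C, and (ii) prove the corresponding key lemmas for small $p$, the proposal restates the paper's method while deferring all of its difficulties, and does not establish the conjecture.
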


\textbf{Family D:}   $\mathcal{C} =\{1, 2\}$

The first 36 terms of the nim- sequence for this game of \textsc{cut} are 
\begin{align*}
&&0, &&1, &&2, &&3, &&1, &&4, &&3, &&2, &&4, &&5, &&6, &&7,
&&8, &&9, &&7, &&6, &&9, &&8,\\
&&11, &&10, &&12, &&13, &&10, &&11,
&&13, &&12, &&15, &&14, &&16, &&17, &&5, &&15, &&17, &&16, &&19, &&18
\end{align*}
It is supposed that the nim-sequence for this particular version of \textsc{cut} is the most difficult to analyze.  We also can not find any pattern here.  In \cite{par}, it was shown that this game is equivalent to the take-and-break game with hexadecimal code 0.7F.

\end{document}